\newcommand{\CC}{\mathbb{C}}
\newcommand{\ZZ}{\mathbb{Z}}
\newcommand{\NN}{\mathbb{N}}
\newcommand{\RR}{\mathbb{R}}
\newcommand{\PP}{\mathbb{P}}
\newcommand{\EE}{\mathbb{E}}
\newcommand{\calC}{\mathcal{C}}
\newcommand{\calL}{\mathcal{L}}
\newcommand{\calF}{\mathcal{F}}
\newcommand{\calX}{\mathcal{X}}
\newcommand{\calD}{\mathcal{D}}
\newcommand{\frakB}{\mathfrak{B}}
\newcommand{\frakX}{\mathfrak{X}}
\newcommand{\Var}{\operatorname{Var}}
\newcommand{\Id}{\operatorname{Id}}
\newcommand{\sign}[1]{\operatorname{sign}({#1})}
\newcommand{\norm}[1]{\lVert {#1} \rVert}
\newcommand{\tr}{\operatorname{tr}}
\DeclareMathOperator{\sinc}{sinc}
\DeclareMathOperator{\intr}{int}
\newcommand{\cl}[1]{\operatorname{cl}({#1})}
\newcommand{\discr}{\operatorname{discr}}
\newcommand{\muAC}{{\mu_{\mathrm{ac}}}}
\newcommand{\mupAC}{{\mu'_{\mathrm{ac}}}}
\newcommand{\muS}{{\mu_{\mathrm{s}}}}
\newcommand{\ud}{{\: \rm d}}
\newcommand{\ue}{\textrm{e}}
\newcommand{\supp}{\operatornamewithlimits{supp}}
\newtheorem{theorem}{Theorem}[section]
\newtheorem{proposition}[theorem]{Proposition}
\newtheorem{lemma}[theorem]{Lemma}
\newtheorem{corollary}[theorem]{Corollary}
\newtheorem{claim}[theorem]{Claim}
\theoremstyle{plain}
\newcounter{thm}
\numberwithin{equation}{section}
\theoremstyle{definition}
\newtheorem{example}[theorem]{Example}
\newtheorem{definition}[theorem]{Definition}
\title{Nevai's condition for measures with unbounded supports}
\author{Grzegorz \'{S}widerski}
\address{
    Grzegorz \'{S}widerski \\
    Institute of Mathematics \\ 
    Polish Academy of Sciences \\
    ul. Śniadeckich 8 \\
    00-696 Warsaw, Poland
}
\curraddr{Faculty of Pure and Applied Mathematics, Wroclaw University of Science and Technology, Wyb. Wyspiańskiego 27, 50-370 Wroclaw, Poland}
\email{grzegorz.swiderski@pwr.edu.pl}
\subjclass[2020]{Primary 42C05; Secondary 47B36}
\keywords{Orthogonal polynomials; Christoffel--Darboux kernel; linear statistics of orthogonal polynomial ensembles}
\begin{document}
\selectlanguage{english}

\begin{abstract}
We study Nevai's condition from the theory of orthogonal polynomials on the real line. We prove that a large class of measures with unbounded Jacobi parameters satisfies Nevai's condition locally uniformly on the support of the measure away from a finite explicit set. This allows us to give applications to relative uniform and weak asymptotics of Christoffel--Darboux kernels on the diagonal and to limit theorems for unconventionally normalized global linear statistics of orthogonal polynomial ensembles.
\end{abstract}

\maketitle

\section{Introduction} \label{sec:1}
Let $\mu$ be a Borel probability measure on the real line with infinite support such that all the moments
\[
	\int_\RR x^n \ud \mu(x), \quad n \geq 0
\]
exist and are finite. Let $L^2(\mu)$ be the usual Hilbert space associated with the scalar product
\[
	\langle f, g \rangle_{L^2(\mu)} =
	\int_\RR f(x) \overline{g(x)} \ud \mu(x).
\]
By performing Gram--Schmidt orthonormalization process on the sequence $(x^n : n \geq 0)$ of monomials we get the sequence of \emph{orthonormal polynomials} $(p_n : n \geq 0)$. By the construction, for any $n \geq 0$ the polynomial $p_n$ is of exact degree $n$, its leading coefficient is positive, and for any $n \geq 0, m \geq 0$ we have
\begin{equation} \label{eq:1}
	\langle p_n, p_m \rangle_{L^2(\mu)} =
	\begin{cases}
		1 & n=m, \\
		0 & \text{otherwise}.
	\end{cases}
\end{equation}
Since the polynomial $xp_n$ is of degree $n+1$, by orthogonality we easily get the following \emph{three-term recurrence relation}
\begin{gather}
	\label{eq:2}
		p_0(x) \equiv 1, \quad p_1(x) = \frac{x-b_0}{a_0}, \\
	\label{eq:3}
	x p_n(x) = a_{n-1} p_{n-1}(x) + b_n p_n(x) + a_n p_{n+1}(x), \quad n \geq 1,
\end{gather}
where
\[
	a_n = \langle x p_n, p_{n+1} \rangle_{L^2(\mu)} > 0, \quad
	b_n = \langle x p_n, p_{n} \rangle_{L^2(\mu)} \in \RR, \quad n \geq 0
\]
are the so-called \emph{Jacobi parameters} of $\mu$.

One can reverse this procedure and for any given sequences $(a_n : n \geq 0), (b_n : n \geq 0)$ of positive and real numbers, respectively, define the sequence of polynomials $(p_n : n \geq 0)$ by formulas \eqref{eq:2} and \eqref{eq:3}. Then \emph{Favard's theorem} (see, e.g. \cite[Remark 5.15]{Schmudgen2017}) asserts existence of a probability measure $\mu$ on the Borel subsets of the real line such that the sequence $(p_n : n \geq 0)$ satisfies the formula \eqref{eq:1}. The measure $\mu$ does not need to be unique. If it is unique, the measure $\mu$ is called \emph{determinate}. Otherwise, it is \emph{indeterminate}. If Jacobi parameters are bounded, i.e.
\begin{equation} \label{eq:5}
	\sup_{n \geq 0} (a_n + |b_n|) < \infty,
\end{equation}
then $\mu$ is determinate and $\supp(\mu)$ is compact. More generally, the \emph{Carleman condition}
\begin{equation} \label{eq:6}
	\sum_{n=0}^\infty \frac{1}{a_n} = \infty
\end{equation}
implies determinacy of $\mu$.

A central object in the theory of orthogonal polynomials are the \emph{Christoffel--Darboux kernels} defined for any $n \geq 1$ by
\begin{equation} \label{eq:7}
	K_n(x,y; \mu) = \sum_{j=0}^{n-1} p_j(x) p_j(y), \quad x,y \in \RR.
\end{equation}
We shall omit $\mu$ in the notation if the measure $\mu$ is clear from the context.
The kernels $K_n$ are reproducing in $L^2(\mu)$ for $\PP_{n-1}$, the subspace of all complex polynomials with degree less than or equal to $n-1$. It means that 
\begin{equation} \label{eq:8}
	p(x) = \int_\RR K_n(x,y) p(y) \ud \mu(y), \qquad p \in \PP_{n-1}, \ x \in \RR.
\end{equation}
They can be expressed in terms of the polynomials $p_{n-1}$ and $p_n$ through the \emph{Christoffel--Darboux formula}
\[
	K_n(x,y) = a_{n-1} 
	\begin{cases}
		\frac{p_n(x) p_{n-1}(y) - p_{n-1}(x) p_n(y)}{x-y} & \text{if } x \neq y, \\
		p_n'(x) p_{n-1}(x) - p_{n-1}'(x) p_n(x) & \text{otherwise}.
	\end{cases}
\]
For some applications of the Christoffel--Darboux kernel in orthogonal polynomials, spectral theory, random matrix theory and data analysis, see, e.g. \cite{Nevai1986}, \cite{Simon2007}, \cite{Lubinsky2016} and \cite{Lasserre2022}, respectively.

Nevai in his monograph \cite{Nevai1979} considered in detail properties of orthogonal polynomials with Jacobi parameters satisfying
\begin{equation} \label{eq:9}
	\lim_{n \to \infty} a_n = \frac{1}{2} \quad \text{and} \quad
	\lim_{n \to \infty} b_n = 0.
\end{equation}
He proved in \cite[Theorem 4.2.14]{Nevai1979} that under \eqref{eq:9} the following weak convergence of probability measures
\begin{equation} \label{eq:10}
	\frac{1}{n} K_n(x,x) \ud \mu(x) 
	\xrightarrow{w} 
	\frac{1}{\pi \sqrt{1-x^2}} \mathds{1}_{(-1,1)}(x) \ud \lambda(x)
\end{equation}
holds true,
where by $\mathds{1}_X$ we denote the indicator function of the set $X \subset \RR$ and $\lambda$ is the Lebesgue measure on~$\RR$. Let us recall that a sequence of positive finite measures $(\omega_n : n \geq 1)$ on the real line \emph{converges weakly} to some positive finite measure $\omega_\infty$, $\omega_n \xrightarrow{w} \omega_\infty$, if
\[
	\lim_{n \to \infty} 
	\int_\RR f \ud \omega_n =
	\int_\RR f \ud \omega_\infty, \quad f \in \calC_b(\RR).
\]
Here by $\calC_b(\RR)$ we denote the Banach space of bounded continuous functions on the real line equipped with the supremum norm $\| \cdot \|_\infty$. 

The formula \eqref{eq:10} allows one to approximate the measure $\mu$ provided one understands the pointwise behaviour of $K_n(x,x)$. Along these lines, consider the following decomposition of the measure $\mu$
\begin{equation} \label{eq:11}
	\mu = \muAC + \muS,
\end{equation}
where the measures $\muAC$ and $\muS$ are absolutely continuous and singular with respect to the Lebesgue measure, respectively. Let $\mupAC$ be the derivative of the function $t \mapsto \muAC((-\infty,t])$.
Suppose that $\supp(\mu) = [-1,1]$ and $\mupAC(x) > 0$ for $\lambda$-a.a. $x \in [-1,1]$. Then by Rakhmanov's theorem (see \cite[Theorem 2]{Rakhmanov1983} and also \cite{Denisov2004}) the formula \eqref{eq:9} holds true. Moreover, by \cite[Theorem 12.1]{Nevai1987} it holds
\begin{equation} \label{eq:12}
	\lim_{n \to \infty} 
	\int_{-1}^1 \bigg| \frac{1}{n} K_n(x,x) \mupAC(x) - \frac{1}{\pi \sqrt{1-x^2}} \bigg| \ud \lambda(x) = 0
\end{equation}
(cf. \eqref{eq:10}). Furthermore, if a stronger \emph{Szeg\H{o}'s condition} is satisfied, i.e.
\begin{equation} \label{eq:13}
	\int_{-1}^1 \frac{\log \mupAC(x)}{\sqrt{1-x^2}} \ud \lambda(x) > -\infty,
\end{equation}
then one even has
\begin{equation} \label{eq:14}
	\lim_{n \to \infty} \frac{1}{n} K_n(x,x) = 
	\frac{1}{\mupAC(x)} \frac{1}{\pi \sqrt{1-x^2}}, \quad \text{$\lambda$-a.a. } x \in (-1,1)
\end{equation}
(see \cite[Theorem 5]{Mate1991}). Let us mention that generalizations of \eqref{eq:10}, \eqref{eq:12} and \eqref{eq:14} to \emph{compactly supported} measures $\mu$ satisfying some regularity conditions have been accomplished in \cite{StahlTotik1992}, \cite{Simon2009} and \cite{Totik2000}, respectively.

Since the condition \eqref{eq:13} is quite restrictive and far from being necessary for \eqref{eq:14} to hold, for a~well-understood measure $\mu$ it is natural to consider new measures of the form
\begin{equation} \label{eq:15}
	\ud \mu_g(x) := g(x) \ud \mu(x),
\end{equation}
where $g$ is a positive $\mu$-measurable function such that the measure $\mu_g$ has all moments finite. By normalizing, one can assume that $\mu_g$ is a probability measure. If the behaviour of $K_n(x,x;\mu)$ is known, it seems to be useful to study the expression
\begin{equation} \label{eq:16}
	\frac{K_n(x,x; \mu_g)}{K_n(x,x; \mu)}.
\end{equation}
To understand the behaviour of \eqref{eq:16} Nevai in \cite[Section 6.2]{Nevai1979} considered for any nonnegative $\mu$-measurable function $f$ and any $n \geq 1$ a new function $G_n[f] : \RR \to [0,\infty]$ defined by
\begin{equation} \label{eq:17}
	G_n[f](x) = 
	\frac{1}{K_n(x,x)} 
	\int_\RR K_n^2(x,y) f(y) \ud \mu(y), \quad x \in \RR.
\end{equation}
According to Proposition~\ref{prop:1} the right-hand side of the formula \eqref{eq:17} is a bounded rational function for any $f \in \calL^\infty(\mu)$, so we shall extend the definition \eqref{eq:17} to such functions. Let us recall that the space $\calL^\infty(\mu)$ consists of $\mu$-essentially bounded measurable functions with the essential supremum seminorm $\|\cdot\|_{\calL^\infty(\mu)}$.

Let us recall some central notions of \cite{Breuer2010a}.
\begin{definition}
Let $\mu$ be a probability measure on the real line with infinite support having all moments finite. 
We say that the measure $\mu$ satisfies \emph{Nevai's condition} at some $x \in \RR$ if
\begin{equation} 
	\label{eq:18}
	\lim_{n \to \infty} G_n[f](x) = f(x), \quad f \in \calC_b(\RR).
\end{equation}
If $K$ is a bounded subset of $\RR$ and
\begin{equation}
	\label{eq:19}
	\lim_{n \to \infty} 
	\sup_{x \in K} 
	\big| G_n[f](x) - f(x) \big| = 0, \quad f \in \calC_b(\RR),
\end{equation}
then we say that the measure $\mu$ satisfies Nevai's condition uniformly on $K$.
\end{definition}
The conditions \eqref{eq:18} and \eqref{eq:19} have been introduced by Nevai in \cite[Section 6.2]{Nevai1979}. His remarkable observation (see Corollary~\ref{thm:1}) states that if $g : \RR \to (0, \infty)$ is such that $g,1/g \in \calL^\infty(\mu)$ and $g$ is continuous at some $x \in \RR$, then
\begin{equation} \label{eq:20}
	\lim_{n \to \infty} 
	\frac{K_n(x,x; \mu_g)}{K_n(x,x; \mu)} = 
	\frac{1}{g(x)}
\end{equation}
provided \eqref{eq:18} is satisfied. Moreover, if additionally $g$ is continuous on some compact $K \subset \RR$ and \eqref{eq:19} holds, then the convergence \eqref{eq:20} is uniform on $K$.

Nevai's condition in the case of compactly supported measures $\mu$ have been studied in detail. It turns out that for such measures the condition \eqref{eq:18} is implied by the subexponential growth of the sequence $(p_n(x) : n \geq 0)$, which means
\begin{equation} \label{eq:21}
	\lim_{n \to \infty} \frac{p_{n}^2(x)}{K_n(x,x)} = 0,
\end{equation}
cf. \cite[Theorem 1.2]{Breuer2010a}. This fact was proven by Nevai (see the proof of \cite[Theorem 6.2.2]{Nevai1979}). Next, \cite[Theorem 8.2]{Breuer2010a} describes a large and important class of compactly supported measures that satisfy Nevai's condition uniformly on $\supp(\mu)$. In \cite{Breuer2010a} some examples of compactly supported measures such that Nevai's condition fails for some $x \in \supp(\mu)$ were constructed. In the article \cite{Lubinsky2011} for any measure $\mu$ with compact support and any $f \in \calL^{\infty}(\lambda)$ it was proven that
\[
	\lim_{n \to \infty} 
	\lambda 
	\big(
		\{ x \in \RR : \mupAC(x) > 0, |G_n[f](x) -f(x)| > \epsilon \}
	\big) = 0, \quad \epsilon > 0
\]
and for any $g : \RR \to (0, \infty)$ satisfying $g,1/g \in \calL^\infty(\mu)$
\[
	\lim_{n \to \infty} 
	\lambda 
	\bigg(
		\bigg\{ x \in \RR : \mupAC(x) > 0,
	\bigg|
		\frac{K_n(x,x;\mu_g)}{K_n(x,x;\mu)} - 
		\frac{1}{g(x)}
	\bigg| > \epsilon \bigg\}
	\bigg) = 0, \quad \epsilon > 0.
\]
Let us also mention the article \cite{Breuer2014}, where Nevai's condition found applications for limit theorems of linear statistics of a class of determinantal point processes, called \emph{orthogonal polynomial ensembles}.

The present article is concerned with the case of measures with \emph{unbounded supports}. On page 251 of the article \cite{Breuer2010a} it was suggested that the study of Nevai's condition in this case seems to be an interesting challenge. It was also mentioned that their methods completely break down for the measures from the article \cite{Damanik2007}. It turns out that for these measures Nevai's condition holds locally uniformly on $\supp(\mu) \setminus \{0\}$ (see Section~\ref{sec:4.1.4}). Next, \cite[Theorem 4.5]{Breuer2014} allows one to prove Nevai's condition for \emph{arbitrary measures} at Lebesgue points of $\mu$ under the additional \emph{sine kernel universality} of the Christoffel--Darboux kernels. In view of the recent breakthrough in sine kernel universality (see \cite{Eichinger2021}) this additional hypothesis follows from the previous one.

In Section~\ref{sec:2} we present sufficient conditions for uniform Nevai's condition. Theorem~\ref{thm:2} shows that Nevai's condition always holds on the support of the discrete part of any determinate measure. Interestingly, the same implication holds also for some (the so-called $N$-extremal) indeterminate measures. In Theorem~\ref{thm:3} we show that if a determinate measure $\mu$ is absolutely continuous on some compact $K$ with a continuous positive density, then $\mu$ satisfies Nevai's condition uniformly on $K$. 

Section~\ref{sec:3} is devoted to some implications of uniform Nevai's condition for $\mu$ combined with some information on the behaviour of $(K_n(x,x;\mu) : n \geq 1)$. In Section~\ref{sec:3.2} we study limit theorems for unconventionally normalized global statistics of orthogonal polynomial ensembles. Next, in Section~\ref{sec:3.1}, we relate pointwise, uniform, or weak asymptotics of $(K_n(x,x;\mu_g) : n \geq 1)$ to the same kind of asymptotics of $(K_n(x,x;\mu) : n \geq 1)$ provided $g,1/g \in \calC_b(\RR)$.

In the last Section~\ref{sec:4} we show examples of measures~$\mu$ with unbounded supports defined in terms of Jacobi parameters which satisfy Nevai's condition locally uniformly on $\supp(\mu) \setminus X$, where $X$ is a finite exceptional set. In particular, we cover a large class of periodically modulated and blended Jacobi parameters. 
Since for such Jacobi parameters we know the asymptotic behaviour of Christoffel--Darboux kernels, this allows us to apply the tools from Section~\ref{sec:3} to measures of the form \eqref{eq:15}. This seems to be an important step forward, since in general no understanding of Jacobi parameters of the measures $\mu_g$ is known.

\subsection*{Acknowledgements}
I would like to thank Bartosz Trojan for useful discussions on the topic of the present article and the reviewers whose valuable comments improved the article significantly.

\section{Properties of Nevai's operators} \label{sec:2}

Let us start with a useful consequence of the reproducing property \eqref{eq:8} of the Christoffel--Darboux kernels. Namely, since $K_n(x,\cdot)$ is a polynomial of degree $n-1$ with respect to the second variable, we get by \eqref{eq:8}
\begin{equation} \label{eq:22}
	K_n(x,x) = \int_\RR K_n^2(x,y) \ud \mu(y), \qquad x \in \RR.
\end{equation}

The following proposition implies that the assignment $\calL^\infty(\mu) \ni f \mapsto G_n[f]$ defines a \emph{contractive operator} $G_n : \calL^\infty(\mu) \to \calC_b(\RR)$ (cf. \cite[Properties 6.2.1(iii)]{Nevai1979}). Therefore, it seems natural to call $(G_n : n \geq 1)$ \emph{Nevai's operators}. Let us recall that for any $f : \RR \to \CC$ we define
\[
    \| f \|_{\infty} := \sup_{x \in \RR} |f(x)|.
\]

\begin{proposition} \label{prop:1}
Let $f \in \calL^\infty(\mu)$. Then $G_n[f]$ is a rational function on $\RR$ satisfying
\begin{equation}
    \label{eq:23}
    \sup_{x \in \RR} | G_n[f](x) | = \| G_n[f] \|_{\infty} \leq \| f \|_{\calL^\infty(\mu)}.
\end{equation}
Moreover, if $f(x) \geq 0$ for $\mu$-a.a. $x \in \RR$, then $G_n[f](x) \geq 0$ for any $x \in \RR$.
\end{proposition}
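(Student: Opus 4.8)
The plan is to exploit the reproducing identity \eqref{eq:22}, which says that for each fixed $x \in \RR$ the finite measure $y \mapsto K_n^2(x,y) \ud \mu(y)$ has total mass $K_n(x,x)$. Consequently, dividing by $K_n(x,x)$ (which is strictly positive since $p_0 \equiv 1$ contributes $1$ to the sum \eqref{eq:7}) turns $y \mapsto \frac{K_n^2(x,y)}{K_n(x,x)} \ud \mu(y)$ into a \emph{probability} measure, and $G_n[f](x)$ in \eqref{eq:17} is precisely the integral of $f$ against that probability measure. From this viewpoint all three assertions become transparent: nonnegativity of $f$ $\mu$-a.e. forces $G_n[f](x) \geq 0$ for every $x$, and the bound $|G_n[f](x)| \leq \|f\|_{\calL^\infty(\mu)}$ is just the statement that the integral of a function against a probability measure is bounded by its essential supremum (one should note that the measure $\frac{K_n^2(x,y)}{K_n(x,x)} \ud \mu(y)$ is absolutely continuous with respect to $\mu$, so a $\mu$-null set is also null for it, which is what lets us pass from $\sup$ to $\mathrm{ess\,sup}_\mu$).

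For the rationality claim, I would use the Christoffel--Darboux formula. Writing $K_n(x,y) = a_{n-1}\frac{p_n(x)p_{n-1}(y) - p_{n-1}(x)p_n(y)}{x-y}$ and expanding the square, the integrand $K_n^2(x,y) f(y)$ is, for fixed $x$, a combination of the quantities
\[
    p_n^2(x) \int_\RR \frac{p_{n-1}^2(y)}{(x-y)^2} f(y) \ud \mu(y), \quad
    p_{n-1}^2(x) \int_\RR \frac{p_n^2(y)}{(x-y)^2} f(y) \ud \mu(y), \quad
    p_n(x) p_{n-1}(x) \int_\RR \frac{p_n(y) p_{n-1}(y)}{(x-y)^2} f(y) \ud \mu(y).
\]
This is not immediately a polynomial in $x$ because of the $(x-y)^{-2}$ factors. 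The clean way around this is to observe that $K_n(x,\cdot)$ is a polynomial of degree $n-1$ in its second variable, hence $K_n^2(x,\cdot) \in \PP_{2n-2}$, and to invoke the reproducing property \eqref{eq:8} for $K_{2n-1}$: applying \eqref{eq:8} with $p(y) = K_n^2(x,y)$ shows
\[
    \int_\RR K_n^2(x,y) f(y) \ud \mu(y)
\]
can be analysed via the expansion of $K_n^2(x,\cdot)$ in the orthonormal basis $p_0,\dots,p_{2n-2}$, whose coefficients are polynomials in $x$ (being finite sums of products $p_i(x)p_j(x)$), while $\int_\RR p_k(y) f(y)\ud\mu(y)$ are constants. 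Therefore the numerator of $G_n[f]$ is a polynomial in $x$, and the denominator $K_n(x,x)$ is a polynomial in $x$ as well, so $G_n[f]$ is a rational function.

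Finally, for the equality $\sup_{x\in\RR}|G_n[f](x)| = \|G_n[f]\|_\infty$: this is literally the definition of $\|\cdot\|_\infty$ recalled just before the proposition, so there is nothing to prove there beyond pointing to it. I do not expect any genuine obstacle; the only point that needs a little care is justifying the rationality rigorously — i.e.\ making sure one does not accidentally claim $G_n[f]$ is a polynomial (it is a ratio of polynomials), and handling the possibility that $K_n(x,x)$ — though it never vanishes on $\RR$ — together with the numerator produces a rational rather than polynomial expression. Boundedness of this rational function on all of $\RR$ then follows from the probability-measure argument above, which also re-derives \eqref{eq:23} without needing to track the degrees of numerator and denominator.
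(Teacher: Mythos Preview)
Your proposal is correct and follows essentially the same approach as the paper: the bound \eqref{eq:23} and the nonnegativity both come from viewing $\frac{K_n^2(x,y)}{K_n(x,x)}\ud\mu(y)$ as a probability measure via \eqref{eq:22}, exactly as the paper does (phrased there as H\"older's inequality). For rationality the paper is more direct than your route: it simply squares the finite sum $K_n(x,y)=\sum_{j=0}^{n-1}p_j(x)p_j(y)$ to get $K_n^2(x,y)=\sum_{j,j'}p_j(x)p_{j'}(x)\,p_j(y)p_{j'}(y)$ and integrates termwise against $f\,\ud\mu$, which immediately exhibits the numerator as a polynomial in $x$ with constant coefficients $\int p_j p_{j'} f\,\ud\mu$; your detour through the Christoffel--Darboux formula and the reproducing property of $K_{2n-1}$ is unnecessary, though the basis-expansion you eventually land on is equivalent.
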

\begin{proof}
Let $f \in \calL^\infty(\mu)$. Then by H\"{o}lder's inequality together with \eqref{eq:22}
\begin{equation} \label{eq:24}
	| G_n[f](x) | \leq 
	\int_\RR \frac{K_n^2(x,y)}{K_n(x,x)} |f(y)| \ud \mu(y)
	\leq \| f \|_{\calL^\infty(\mu)}, \quad x \in \RR,
\end{equation}
which shows that $G_n[f]$ is a well-defined function and it satisfies \eqref{eq:23}.
Next, by \eqref{eq:7}
\begin{equation} \label{eq:25}
	G_n[f](x) =
	\frac{1}{K_n(x,x)} 
	\sum_{0\leq j,j' \leq n-1}
	p_j(x) p_{j'}(x) 
	\int_\RR p_j(y) p_{j'}(y) f(y) \ud \mu(y).
\end{equation}
By H\"{o}lder's inequality applied twice
\begin{align*}
	\bigg| \int_\RR p_j(y) p_{j'}(y) f(y) \ud \mu(y) \bigg|
	&\leq
	\| f \|_{\calL^\infty(\mu)} 
	\int_\RR |p_j(y) p_{j'}(y)| \ud \mu(y) \\
	&\leq
	\| f \|_{\calL^\infty(\mu)} 
	\| p_j \|_{L^2(\mu)} \|p_{j'} \|_{L^2(\mu)} \\
	&= 
	\| f \|_{\calL^\infty(\mu)},
\end{align*}
where in the last equality we used \eqref{eq:1}. Thus, the expression \eqref{eq:25} is a well-defined rational function. Since its denominator does not vanish, it is a continuous function. Finally, the non-negativity of the function $G_n[f]$ for any non-negative $f$ follows immediately from formula \eqref{eq:17}.
\end{proof}

\subsection{Sufficient conditions for Nevai's condition} \label{sec:2.1}

It turns out that Nevai's condition has an alternative definition in the measure theoretical language. Namely, for a given $x \in \RR$ let us define the sequence of measures
\begin{equation}
	\label{eq:26}
	\ud \omega^x_n(y) = \frac{K_n^2(x, y)}{K_n(x, x)} \ud \mu(y).
\end{equation}
By \eqref{eq:22} these measures are probability measures.
Moreover, by \eqref{eq:17} and \eqref{eq:26} one has
\begin{equation}
	\label{eq:27}
	G_n[f](x) = \int_\RR f(y) \ud \omega^x_n(y).
\end{equation}
Hence, the condition \eqref{eq:18} is equivalent to $\omega^x_n \xrightarrow{w} \delta_{x}$, where $\delta_x$ is the Dirac measure at $x$.

We start with the following lemma, which is part of the proof of \cite[Theorem 4.4]{Breuer2014}. 
\begin{lemma} 
    \label{lem:2}
    Let $K$ be a bounded subset of $\RR$. If there exists $\eta_0 > 0$ such that for each $\eta \in (0, \eta_0]$,
    \begin{equation}
        \label{eq:28}
        \liminf_{n \to \infty} \inf_{x \in K} \omega^x_n \big([x-\eta, x+\eta]\big) = 1,
    \end{equation}
    then $\mu$ satisfies Nevai's condition uniformly on $K$.
\end{lemma}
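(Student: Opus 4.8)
The plan is to exploit the measure-theoretic reformulation \eqref{eq:27}. Since each $\omega_n^x$ is a probability measure (by \eqref{eq:22}), for $f \in \calC_b(\RR)$ and $x \in \RR$ we may write
\[
	G_n[f](x) - f(x) = \int_\RR \big( f(y) - f(x) \big) \ud \omega_n^x(y),
\]
and the idea is to estimate this integral by splitting the domain of integration according to whether $\abs{y-x} \leq \eta$ or $\abs{y-x} > \eta$, for a fixed $\eta \in (0, \eta_0]$.

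First I would record the needed modulus-of-continuity estimate. Because $K$ is bounded, the set $K_{\eta_0} := \{ y \in \RR : \distance(y, K) \leq \eta_0 \}$ is bounded, hence $f$ is uniformly continuous on $\cl{K_{\eta_0}}$; set
\[
	\rho(\eta) := \sup \big\{ \abs{f(y) - f(x)} : x, y \in \cl{K_{\eta_0}}, \ \abs{x-y} \leq \eta \big\},
\]
so that $\rho(\eta) \to 0$ as $\eta \to 0^+$. Now fix $x \in K$ and $\eta \in (0, \eta_0]$. On $[x-\eta, x+\eta]$ both $x$ and $y$ lie in $\cl{K_{\eta_0}}$ with $\abs{x-y} \leq \eta$, so the integrand is bounded there by $\rho(\eta)$; on the complement we use $\abs{f(y)-f(x)} \leq 2 \| f \|_\infty$ together with $\omega_n^x\big(\RR \setminus [x-\eta, x+\eta]\big) = 1 - \omega_n^x\big([x-\eta, x+\eta]\big)$. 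This yields
\[
	\big| G_n[f](x) - f(x) \big| \leq \rho(\eta) + 2 \| f \|_\infty \big( 1 - \omega_n^x([x-\eta, x+\eta]) \big),
\]
and, taking the supremum over $x \in K$,
\[
	\sup_{x \in K} \big| G_n[f](x) - f(x) \big| \leq \rho(\eta) + 2 \| f \|_\infty \Big( 1 - \inf_{x \in K} \omega_n^x([x-\eta, x+\eta]) \Big).
\]

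Next I would pass to the limit. Taking $\limsup_{n \to \infty}$ of the last inequality and invoking \eqref{eq:28} (the $\liminf$ there equalling $1$, together with the trivial bound $\inf_{x \in K} \omega_n^x([x-\eta,x+\eta]) \leq 1$, forces the full limit of the infimum to be $1$), we get $\limsup_{n \to \infty} \sup_{x \in K} \abs{G_n[f](x) - f(x)} \leq \rho(\eta)$ for every $\eta \in (0, \eta_0]$. Letting $\eta \to 0^+$ and using $\rho(\eta) \to 0$ gives $\lim_{n \to \infty} \sup_{x \in K} \abs{G_n[f](x) - f(x)} = 0$, which is precisely \eqref{eq:19}.

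I do not expect a genuine obstacle here: this is the standard ``weak convergence to a point mass'' argument, made uniform in the base point $x$. The only point requiring a little care is that an $f \in \calC_b(\RR)$ need not be uniformly continuous on all of $\RR$, which is exactly why boundedness of $K$ is assumed — it confines $y$, in the near-diagonal part of the integral, to the bounded neighbourhood $\cl{K_{\eta_0}}$ of $K$, on which uniform continuity is available; the remainder of the argument is routine.
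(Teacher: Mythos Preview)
Your proof is correct and follows essentially the same route as the paper: write $G_n[f](x)-f(x)=\int_\RR (f(y)-f(x))\,\ud\omega_n^x(y)$, split into $[x-\eta,x+\eta]$ and its complement, bound the near part via uniform continuity of $f$ on a compact neighbourhood of $K$ and the far part by $2\|f\|_\infty(1-\inf_{x\in K}\omega_n^x([x-\eta,x+\eta]))$, then apply \eqref{eq:28} and let $\eta\downarrow 0$. Your explicit observation that \eqref{eq:28} together with the trivial upper bound forces the full limit of the infimum to be $1$ is a small clarification the paper leaves implicit.
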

\begin{proof}
	Let $f \in \calC_b(\RR)$. By \eqref{eq:27} we need to show that
	\begin{equation}
		\label{eq:29}
		\lim_{n \to \infty} 
		\sup_{x \in K} 
		\bigg| \int_\RR \big( f(y) - f(x) \big) \ud \omega_n^x(y) \bigg| = 0.
	\end{equation}
	To see this, we write
	\begin{align*}
		\bigg| \int_\RR \big( f(y) - f(x) \big) \ud \omega_n^x(y) \bigg| 
		&\leq
		\int_{\RR \setminus [x-\eta, x+\eta]} |f(y) - f(x)| \ud \omega_n^x(y) \\
		&\phantom{\leq}+
		\int_{[x-\eta, x+\eta]} |f(y) - f(x)| \ud \omega_n^x(y).
	\end{align*}
	For the first integral we have
	\begin{align}
		\nonumber
		\sup_{x \in K}
		\int_{\RR \setminus [x-\eta, x+\eta]} 
		|f(y) - f(x)| \ud \omega_n^x(y) 
		&\leq
		2 \| f \|_\infty \sup_{x \in K} \Big( 1 - \omega_{n}^x\big([x-\eta,x+\eta ]\big) \Big) \\
		&=
		\label{eq:30}
		2 \| f \|_\infty \Big( 1 - \inf_{x \in K} \omega_{n}^x\big([x-\eta,x+\eta]\big) \Big).
	\end{align}
	The second integral can be estimated as follows
	\begin{equation}
		\label{eq:31}
		\sup_{x \in K} 
		\int_{[x-\eta, x+\eta]} |f(y) - f(x)| \ud \omega_n^x(y) 
		\leq
		\sup_{x \in K} \sup_{h \in [-\eta,\eta ]} |f(x+h) - f(x)|.
	\end{equation}
	Now, in view of \eqref{eq:28}, the right-hand side of \eqref{eq:30} tends to $0$ for any $\eta \in (0, \eta_0]$. Next, 
	the right-hand side of \eqref{eq:31} tends to $0$ as $\eta \downarrow 0$ by the uniform continuity of $f$ on the compact set(\footnote{For any $X \subset \RR$ by $\cl{X}$ we denote its closure in Euclidean topology.})
	$\tilde{K} := \cl{K} + [-\delta,\delta]$. Hence, \eqref{eq:29} follows.
\end{proof}

The following theorem implies that Nevai's condition is always satisfied at each point of the support of the discrete part
of any determinate measure~$\mu$.
\begin{theorem} \label{thm:2}
    Let $\mu$ be a probability measure on the real line with all moments finite. Suppose that the corresponding sequence $(p_n : n \geq 0)$ of orthogonal polynomials forms an orthonormal basis(\footnote{In particular, this property is satisfied when the measure $\mu$ is determinate, see, e.g. \cite[Theorem 6.10]{Schmudgen2017}.}) in $L^2(\mu)$. If for some $x \in \RR$ it holds $\mu(\{x\}) > 0$, then the measure $\mu$ satisfies Nevai's condition at $x$.
\end{theorem}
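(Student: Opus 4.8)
The plan is to use the completeness of $(p_n : n \geq 0)$ in $L^2(\mu)$, applied to the indicator function $\ind{\{x\}}$, which is a nonzero element of $L^2(\mu)$ precisely because $\mu(\{x\}) > 0$. First I would compute its Fourier coefficients: for every $j \geq 0$,
\[
	\langle \ind{\{x\}}, p_j \rangle_{L^2(\mu)} = \int_\RR \ind{\{x\}}(y) p_j(y) \ud \mu(y) = p_j(x)\, \mu(\{x\}),
\]
since $p_j$ has real coefficients and $x \in \RR$. Parseval's identity (valid because $(p_j)$ is an orthonormal basis) then gives
\[
	\mu(\{x\}) = \| \ind{\{x\}} \|_{L^2(\mu)}^2 = \mu(\{x\})^2 \sum_{j=0}^\infty p_j(x)^2,
\]
so that $\sum_{j=0}^\infty p_j(x)^2 = 1/\mu(\{x\}) < \infty$. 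In particular the nondecreasing sequence $K_n(x,x) = \sum_{j=0}^{n-1} p_j(x)^2$ converges to $1/\mu(\{x\})$ as $n \to \infty$. This is the heart of the argument, and the only place where the orthonormal basis hypothesis (as opposed to mere finiteness of moments) is used.

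Next I would observe that the probability measure $\omega_n^x$ from \eqref{eq:26} has an atom at $x$ of mass
\[
	\omega_n^x(\{x\}) = \frac{K_n^2(x,x)}{K_n(x,x)}\, \mu(\{x\}) = K_n(x,x)\, \mu(\{x\}),
\]
which by the previous step increases to $1$. Consequently, for every $\eta > 0$ one has $\omega_n^x\big([x-\eta, x+\eta]\big) \geq \omega_n^x(\{x\}) \to 1$, and since $\omega_n^x$ is a probability measure this forces $\omega_n^x\big([x-\eta, x+\eta]\big) \to 1$. Applying Lemma~\ref{lem:2} with $K = \{x\}$ (with any $\eta_0 > 0$) yields Nevai's condition at $x$. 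Alternatively one may finish directly: using \eqref{eq:27} and the fact that $f(y) - f(x)$ vanishes on the atom at $x$,
\[
	|G_n[f](x) - f(x)| = \bigg| \int_{\RR \setminus \{x\}} \big(f(y) - f(x)\big) \ud \omega_n^x(y) \bigg| \leq 2 \|f\|_\infty\, \big(1 - K_n(x,x)\, \mu(\{x\})\big) \xrightarrow[n\to\infty]{} 0
\]
for every $f \in \calC_b(\RR)$.

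I do not anticipate a genuine obstacle. Once the summability $\sum_j p_j(x)^2 < \infty$ is in hand, the statement reduces to the essentially trivial observation that almost all of the mass of $\omega_n^x$ is carried by the atom at $x$. The one subtle point is that the computation of $\sum_j p_j(x)^2$ via Parseval genuinely requires $(p_n)$ to span $L^2(\mu)$, rather than just determinacy of $\mu$; the footnote to the statement records that this spanning property is automatic in the determinate case, so the theorem indeed covers determinate measures, and the same proof applies verbatim to $N$-extremal indeterminate measures, for which $(p_n)$ is also an orthonormal basis.
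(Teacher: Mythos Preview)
Your proof is correct and follows essentially the same route as the paper: both establish $K_n(x,x)\,\mu(\{x\}) \to 1$, bound $\omega_n^x([x-\eta,x+\eta])$ below by this quantity, and invoke Lemma~\ref{lem:2} with $K=\{x\}$. The only difference is that you derive $\lim_n K_n(x,x) = 1/\mu(\{x\})$ directly via Parseval for $\ind{\{x\}}$, whereas the paper cites classical results (Shohat--Tamarkin) after first invoking M.~Riesz's theorem to identify the orthonormal-basis hypothesis with the determinate/$N$-extremal dichotomy; your argument is slightly more self-contained in that it uses the stated hypothesis directly without the detour through that dichotomy.
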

\begin{proof}
	For every $\eta > 0$ we have
	\[
		\omega^x_n\big([x-\eta, x+\eta]\big) 
		\geq  
		\frac{K_n^2(x,x;\mu)}{K_n(x,x;\mu)} \mu(\{x\}) =
		K_n(x,x;\mu) \mu(\{x\}).
	\]
	According to M. Riesz theorem (see, e.g. \cite[Theorem 2.14]{Shohat1943}) the orthogonal polynomials $(p_n : n \geq 0)$ form an orthonormal basis in $L^2(\mu)$ if and only if the measure $\mu$ is either determinate or the so-called $N$-extremal. Then
	\[
		\lim_{n \to \infty} K_n(x,x;\mu) = \frac{1}{\mu(\{x\})},
	\]
	(see, e.g. \cite[Corollary 2.6]{Shohat1943} in the determinate case and \cite[Theorem 2.13]{Shohat1943} in the N-extremal case) and the result follows from Lemma~\ref{lem:2} applied to $K = \{x\}$.
\end{proof}

By adapting the proofs of \cite[Theorems 4.4 and 4.5]{Breuer2014} and using \cite[Theorem 1.4]{Eichinger2021}, we obtain the following theorem.
\begin{theorem} 
	\label{thm:3}
	Let $\mu$ be a determinate probability measure on the real line. 
If the measure $\mu$ is absolutely continuous on some open $U \subset \RR$ with positive and continuous density, then $\mu$ satisfies Nevai's condition uniformly on every compact $K \subset U$.
\end{theorem}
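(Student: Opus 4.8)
The plan is to verify the concentration hypothesis~\eqref{eq:28} of Lemma~\ref{lem:2} and then simply invoke that lemma. Since \eqref{eq:28} asks that the probability measures $\omega^x_n$ put asymptotically all of their mass in an arbitrarily small neighbourhood of $x$, uniformly for $x$ in a compact set, the natural input is bulk (sine-kernel) universality of the Christoffel--Darboux kernels, which is precisely what \cite[Theorem~1.4]{Eichinger2021} provides on a region of absolute continuity carrying a positive continuous density, given that $\mu$ is determinate. So the whole argument is: reduce to Lemma~\ref{lem:2}, then feed in universality, exactly in the spirit of the proofs of \cite[Theorems~4.4 and 4.5]{Breuer2014}.

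First I would fix a compact $K \subset U$ and pick $\eta_0 > 0$ so small that $\tilde K := K + [-\eta_0,\eta_0] \subset U$. Write $w := \mupAC$ for the density; on $U$ one has $\ud\mu = w\,\ud\lambda$, and $w$ is continuous and strictly positive on the compact $\tilde K$, hence $0 < \inf_{\tilde K} w \le \sup_{\tilde K} w < \infty$. From \cite[Theorem~1.4]{Eichinger2021} I would extract the following locally uniform form of universality: setting $\tau_n(x) := w(x)\,K_n(x,x)$, one has $\inf_{x\in\tilde K}\tau_n(x)\to\infty$, and for every $R>0$
\[
	\lim_{n\to\infty}\ \sup_{x\in\tilde K}\ \sup_{|t|\le R}\ \left|\frac{K_n\big(x,\, x+\tfrac{t}{\tau_n(x)}\big)}{K_n(x,x)}-\frac{\sin(\pi t)}{\pi t}\right| = 0 .
\]
Passing from the weighted kernel used in \cite{Eichinger2021} to this ratio form only costs a factor $1+o(1)$, because $w$ is continuous and the rescaled increments $t/\tau_n(x)$ tend to $0$ uniformly on $\tilde K$.

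Now fix $\eta\in(0,\eta_0]$ and $R>0$. For $n$ so large that $R/\tau_n(x)\le\eta$ for all $x\in K$, the interval $[x-R/\tau_n(x),\,x+R/\tau_n(x)]$ is contained both in $[x-\eta,x+\eta]$ and in $U$, so, discarding the remaining mass and using $\ud\mu = w\,\ud\lambda$ there,
\begin{align*}
	\omega^x_n\big([x-\eta,x+\eta]\big)
	&\ \ge\ \frac{1}{K_n(x,x)}\int_{x-R/\tau_n(x)}^{x+R/\tau_n(x)} K_n^2(x,y)\, w(y)\,\ud\lambda(y) \\
	&\ =\ \frac{1}{w(x)}\int_{-R}^{R}\left(\frac{K_n\big(x,\, x+\tfrac{t}{\tau_n(x)}\big)}{K_n(x,x)}\right)^{\!2} w\big(x+\tfrac{t}{\tau_n(x)}\big)\,\ud\lambda(t),
\end{align*}
after substituting $y = x + t/\tau_n(x)$ and using $K_n(x,x)\,\tau_n(x) = w(x)\,K_n(x,x)^2$. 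By the displayed universality limit, together with the uniform continuity of $w$ on $\tilde K$ (which also yields a uniform bound on the integrand for $x\in K$, $|t|\le R$, $n$ large), the last expression converges, uniformly in $x\in K$, to $\int_{-R}^{R}\big(\tfrac{\sin\pi t}{\pi t}\big)^2\,\ud\lambda(t)$. Hence $\liminf_{n}\inf_{x\in K}\omega^x_n([x-\eta,x+\eta])\ge\int_{-R}^{R}\big(\tfrac{\sin\pi t}{\pi t}\big)^2\,\ud\lambda(t)$, and letting $R\to\infty$ with $\int_\RR\big(\tfrac{\sin\pi t}{\pi t}\big)^2\,\ud\lambda(t) = 1$ gives $\liminf_{n}\inf_{x\in K}\omega^x_n([x-\eta,x+\eta])\ge 1$; the reverse inequality is automatic since each $\omega^x_n$ is a probability measure. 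Thus \eqref{eq:28} holds for every $\eta\in(0,\eta_0]$, and Lemma~\ref{lem:2} yields Nevai's condition uniformly on $K$.

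The step I expect to be the real obstacle is the first one: correctly importing \cite[Theorem~1.4]{Eichinger2021}, i.e.\ checking that its hypotheses (determinacy together with local absolute continuity and a positive continuous density) are in force, extracting the \emph{locally uniform on $\tilde K$} statement with the explicit scaling $\tau_n$, and confirming that $\tau_n\to\infty$ uniformly on $\tilde K$ so that the fixed window $[x-\eta,x+\eta]$ eventually engulfs the rescaled window $[x-R/\tau_n(x),\,x+R/\tau_n(x)]$. Everything after that is the change of variables above followed by a dominated-convergence argument in which the domination is supplied by universality itself, just as in \cite[Theorems~4.4 and 4.5]{Breuer2014}.
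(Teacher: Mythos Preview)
Your proposal is correct and follows essentially the same route as the paper: verify the concentration hypothesis \eqref{eq:28} of Lemma~\ref{lem:2} by rescaling $y = x + t/\tau_n(x)$ with $\tau_n(x) = \mupAC(x)K_n(x,x)$, invoke sine-kernel universality from \cite[Theorem~1.4]{Eichinger2021} to pass to $\int_{-R}^{R}\sinc^2(\pi t)\,\ud t$, and then let $R\to\infty$. The paper additionally cites \cite[Lemma~3.3]{Garnett2007} alongside \cite{Eichinger2021}, and it handles the uniform divergence $\inf_{x\in K}\tau_n(x)\to\infty$ explicitly via Dini's theorem (the Christoffel functions $\lambda_n(x)=1/K_n(x,x)$ decrease monotonically to $0$ on $K$ by determinacy and absence of atoms), rather than extracting it from \cite{Eichinger2021} as you suggest; since you flagged this step as the likely obstacle, that is the detail you would want to fill in.
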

\begin{proof}
Let $K \subset U$ be compact. 
We are going to prove that the hypotheses of Lemma~\ref{lem:2} are satisfied on $K$. Let $\eta_0 > 0$ be such that 
\[
    \bigcup_{x \in K} [x -\eta_0, x + \eta_0] \subset U
\]

Let $\lambda_n(x) = 1/K_n(x,x)$ be the Christoffel function. Since $\mu$ is determinate and $\mu(\{x\}) = 0$ for any $x \in K$, we have
\[
    \lim_{n \to \infty} \lambda_n(x) = 0, \quad x \in K.
\]
Since $(\lambda_n(x) : n \geq 1)$ is a sequence of decreasing continuous functions, by Dini's theorem (see, e.g. \cite[Theorem 7.13]{Rudin1976}), the convergence is uniform on $K$. Therefore,
\begin{equation} \label{eq:93}
	\lim_{n \to \infty} \inf_{x \in K} K_n(x,x) = \infty.
\end{equation}
Define
\begin{equation} \label{eq:94}
	\tau_n(x) = \mupAC(x) K_n(x,x).
\end{equation}
Since $\mupAC$ is continuous and positive on $K$, the formula \eqref{eq:93} leads to
\begin{equation} \label{eq:95}
	\lim_{n \to \infty} \inf_{x \in K} \tau_n(x) = \infty.
\end{equation}
To prove \eqref{eq:28}, we observe that for $\eta \in (0, \eta_0]$ by \eqref{eq:94}
\begin{align*}
    \omega^x_n\big( [x-\eta, x+\eta] \big) 
    &= 
    \int_{x -\eta}^{x+\eta} 
    \frac{K_n^2(x, y)}{K_n(x, x)} \mupAC(y) \ud y \\
    &= 
    \int_{-\eta \tau_n(x)}^{\eta \tau_n(x)} 
    \frac{K_n^2 \Big(x, x+\tfrac{t}{\tau_n(x)} \Big)}{K_n^2(x,x)}
    \frac{\mupAC \Big(x+\tfrac{t}{\tau_n(x)} \Big)}{\mupAC(x)} \ud t.
\end{align*}
By \eqref{eq:95} for any $M > 0$ there exists $N_0 > 0$ such that for any $n \geq N_0$ we have $\inf_{x \in K} \tau_n(x) \geq M/\eta$. 
Thus, 
\[
    \omega^x_n\big( [x-\eta, x+\eta] \big) 
    \geq 
    \int_{-M}^{M} 
    \frac{K_n^2 \Big(x, x+\tfrac{t}{\tau_n(x)} \Big)}{K_n^2(x,x)}
    \frac{\mupAC \Big(x+\tfrac{t}{\tau_n(x)} \Big)}{\mupAC(x)} \ud t.
\]
Since $\mupAC$ is continuous and positive on $K$, we have by \cite[Theorem 1.4]{Eichinger2021} and \cite[Lemma 3.3]{Garnett2007}
\[
    \lim_{n \to \infty}
    \frac{K_n^2 \Big(x, x+\tfrac{t}{\tau_n(x)} \Big)}{K_n^2(x,x)}
    \frac{\mupAC \Big(x+\tfrac{t}{\tau_n(x)} \Big)}{\mupAC(x)} 
    =
    \sinc^2(\pi t)
\]
uniformly(\footnote{Let us recall that 
$\sinc(x) =
    \begin{cases}
        \frac{\sin(x)}{x} & x \neq 0, \\
        1 & \text{otherwise}.
    \end{cases}$}) with respect to $(x,t) \in K \times [-M, M]$. Thus, 
\begin{align*}
    \liminf_{n \to \infty}
    \inf_{x \in K} \omega^x_n\big( [x-\eta, x+\eta] \big) 
    &\geq 
    \int_{-M}^{M} 
    \sinc^2(\pi t) \ud t.
\end{align*}
Since $M > 0$ was arbitrary and
\[
    \int_{-\infty}^\infty \sinc^2(\pi t) \ud t = 1
\]
we get \eqref{eq:28}, which is what we needed to show.
\end{proof}

\section{Applications of Nevai's condition} \label{sec:3}
In this section we shall study implications of uniform Nevai's condition for $\mu$ combined with some information on the behaviour of $(K_n(x,x;\mu) : n \geq 1)$.

\subsection{Law of large numbers for global linear statistics of orthogonal polynomial ensembles} \label{sec:3.2}
In this section we shall study limit theorems for linear statistics of an important subclass of determinantal point processes, which contains many models considered in physics, statistical mechanics, probability theory and combinatorics, see the survey \cite{Konig2005} for more details.

An \emph{orthogonal polynomial ensemble} of size $n \in \NN$ is a probability measure $\PP^{(n)}$ on $\RR^n$ of the form
\begin{equation} \label{eq:43}
	\ud \PP^{(n)} (\lambda_1,\ldots,\lambda_n) =
	\frac{1}{Z_n} 
	\prod_{1 \leq i < j \leq n} (\lambda_j - \lambda_i)^2 
	\ud \mu(\lambda_1) \ldots \ud \mu(\lambda_n),
\end{equation}
where $\mu$ is a probability measure on $\RR$ with all moments finite and $Z_n > 0$ is the normalizing constant. It can be shown that \eqref{eq:43} can be rewritten to
\begin{equation} \label{eq:44}
	\ud \PP^{(n)} (\lambda_1,\ldots,\lambda_n) =
	\frac{1}{n!}
	\det \big[ K_n(\lambda_i, \lambda_j; \mu) \big]_{i,j=1}^n
	\ud \mu(\lambda_1) \ldots \ud \mu(\lambda_n),
\end{equation}
see, e.g. \cite[Lemma 2.8]{Konig2005}. 

Suppose that a random vector $\big\{ \lambda_i^{(n)} \big\}_{i=1}^n$ has the distribution $\PP^{(n)}$. One can associate to it a random measure
\[
	\Xi^{(n)} = \sum_{i=1}^n \delta_{\lambda_i^{(n)}}.
\]
For any $\mu$-measurable function $f : \RR \to \RR$ one considers a \emph{linear statistic} of $\Xi^{(n)}$ defined by
\[
	\Xi^{(n)} f = \sum_{i=1}^n f \big( \lambda_i^{(n)} \big).
\]
From the determinantal structure of \eqref{eq:44} we have
\begin{equation}
	\label{eq:45}
	\EE [\Xi^{(n)} f] =
	\int_\RR f(x) K_n(x,x) \ud \mu(x)
\end{equation}
and
\begin{equation}
	\label{eq:46}
	\Var[\Xi^{(n)} f] =
	\int_\RR f^2(x) K_n(x,x) \ud \mu(x) -
	\int_\RR \int_\RR f(x) f(y) K_n^2(x,y) \ud \mu(x) \ud \mu(y),
\end{equation}
see, e.g. \cite[Appendix A.2]{Breuer2014}.

In what follows, we shall assume that the random measures $(\Xi^{(n)} : n \geq 1)$ are defined on the same probability space $(\Omega, \calF, \PP)$. 

In the following lemma we assume that we understand weak asympotic behavior of the sequence $(K_n(x,x) : n \geq 1)$ (see \eqref{eq:47}). Recall that for compactly supported measures, the right choice is usually $w(x) \equiv 1$ and $\rho_n = n$ (see, e.g. \cite{Simon2009}). In Section~\ref{sec:4.1} we show examples of measures with unbounded supports such that one needs to take $w(x)=1+x^2$ and a more general sequence $(\rho_n : n \geq 1)$ (see formulas \eqref{eq:70}, \eqref{eq:77}, and \eqref{eq:85}).
\begin{lemma} \label{lem:3}
Suppose that for some positive sequence $(\rho_n : n \geq 1)$ tending to infinity, there are a continuous function $w:\RR \to [1,\infty)$ and a positive Radon measure $\nu$ satisfying $\int_\RR \frac{1}{w(x)} \ud \nu(x) < \infty$ such that
\begin{equation}
	\label{eq:47}
	\frac{1}{\rho_n} 
	\frac{K_n(x,x)}{w(x)} \ud \mu(x) 
	\xrightarrow{w} 
	\frac{1}{w(x)} \ud \nu(x).
\end{equation}
Then for any $f \in \calC_b(\RR)$ satisfying $\sup_{x \in \RR} w(x) |f(x)| < \infty$ we have
\begin{equation}
	\label{eq:48}
	\lim_{n \to \infty} 
	\frac{1}{\rho_n} 
	\EE[\Xi^{(n)} f] =
	\int_\RR f \ud \nu.
\end{equation}
Moreover, for any $\epsilon > 0$
\begin{equation}
	\label{eq:50}
	\lim_{n \to \infty} 
	\PP 
	\bigg( 
	\bigg|
		\frac{1}{\rho_n} \Xi^{(n)} f - \int_\RR f \ud \nu
	\bigg| \geq \epsilon 
	\bigg) 
	= 
	0.
\end{equation}
\end{lemma}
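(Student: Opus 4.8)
The plan is to treat the two assertions separately, starting with the expectation formula \eqref{eq:48}, which is essentially a direct consequence of the weak convergence hypothesis \eqref{eq:47} together with \eqref{eq:45}. By \eqref{eq:45} we have $\frac{1}{\rho_n}\EE[\Xi^{(n)}f] = \int_\RR f(x)\,\frac{1}{\rho_n} K_n(x,x)\ud\mu(x)$. To match this with the test-function pairing governed by \eqref{eq:47}, I would write the integrand as $\big(w(x)f(x)\big)\cdot\frac{1}{\rho_n}\frac{K_n(x,x)}{w(x)}$ and observe that $w f$ is, by hypothesis, a bounded continuous function; hence \eqref{eq:47} gives $\int_\RR w(x)f(x)\cdot\frac{1}{\rho_n}\frac{K_n(x,x)}{w(x)}\ud\mu(x) \to \int_\RR w(x)f(x)\cdot\frac{1}{w(x)}\ud\nu(x) = \int_\RR f\ud\nu$, which is \eqref{eq:48}. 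The finiteness of the limit is guaranteed by $\int_\RR \frac{1}{w}\ud\nu < \infty$ combined with $\sup_x w(x)|f(x)| < \infty$.

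For \eqref{eq:50}, the natural route is a second-moment (Chebyshev) argument: it suffices to show $\frac{1}{\rho_n}\big(\Xi^{(n)}f - \EE[\Xi^{(n)}f]\big) \to 0$ in $L^2(\PP)$, i.e. $\frac{1}{\rho_n^2}\Var[\Xi^{(n)}f] \to 0$, and then combine with \eqref{eq:48}. Using \eqref{eq:46}, the variance is $\int_\RR f^2(x) K_n(x,x)\ud\mu(x) - \int\int f(x)f(y)K_n^2(x,y)\ud\mu(x)\ud\mu(y)$. Rewriting the double integral via the kernel $\ud\omega_n^x$ from \eqref{eq:26}, one gets $\Var[\Xi^{(n)}f] = \int_\RR K_n(x,x)\big(f^2(x) - \int_\RR f(x)f(y)\ud\omega_n^x(y)\big)\ud\mu(x)$; a cleaner symmetric form is to note that, since $\omega_n^x$ is a probability measure, $\Var[\Xi^{(n)}f] = \int_\RR K_n(x,x)\Big(\int_\RR f(x)\big(f(x)-f(y)\big)\ud\omega_n^x(y)\Big)\ud\mu(x)$, and more usefully still, symmetrizing in $x,y$ and using $K_n^2(x,y)=K_n^2(y,x)$, one obtains the manifestly nonnegative expression $\Var[\Xi^{(n)}f] = \tfrac12 \int\int \big(f(x)-f(y)\big)^2 K_n^2(x,y)\ud\mu(x)\ud\mu(y)$. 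The task is then to bound $\frac{1}{\rho_n^2}$ times this by something tending to zero.

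The main obstacle — and the step that requires genuine input beyond \eqref{eq:47} — is controlling this variance integral, since the double integral ranges over all of $\RR^2$ and $f$ is only assumed bounded (the weight $w$ need not decay fast). The idea is to split the $y$-integration into the region $|y-x|\le\eta$ and its complement. On the near-diagonal region, $(f(x)-f(y))^2$ is small by uniform continuity of $f$ on compacts — but here one must be careful because $f$ is only bounded continuous, not uniformly continuous, on all of $\RR$; the weight constraint $\sup_x w(x)|f(x)|<\infty$ with $w\ge 1$ forces $f(x)\to 0$ at infinity along the essential support, which I expect is exactly what rescues uniform continuity on the relevant set. On the far region, one bounds $(f(x)-f(y))^2 \le 4\|f\|_\infty^2$ and uses $\int_{|y-x|>\eta} K_n^2(x,y)\ud\mu(y) = K_n(x,x)\big(1-\omega_n^x([x-\eta,x+\eta])\big)$, so this contribution is $4\|f\|_\infty^2 \int_\RR K_n(x,x)\big(1-\omega_n^x([x-\eta,x+\eta])\big)\ud\mu(x)$. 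This is where I would need to invoke the weak convergence \eqref{eq:47} once more, together with the fact (from the surrounding theory, e.g. via the earlier results on Nevai's condition or a tightness argument) that the probability mass $\omega_n^x$ places outside a fixed neighbourhood of $x$ becomes uniformly small on the bulk where $K_n(x,x)\ud\mu(x)/\rho_n$ concentrates; dividing by $\rho_n$ and letting $n\to\infty$ then $\eta\downarrow 0$ yields \eqref{eq:50}. I would expect the honest version of this last estimate to lean on the uniform Nevai condition established in Section~\ref{sec:2} holding $\nu$-almost everywhere, or on an explicit a priori bound on $K_n^2(x,y)/K_n(x,x)$ off the diagonal; pinning down the precise hypothesis under which the far-region term is negligible after division by $\rho_n$ is the crux of the argument.
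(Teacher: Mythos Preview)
Your treatment of \eqref{eq:48} is correct and matches the paper exactly.

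For \eqref{eq:50}, however, you have missed the simple route and are working much harder than the lemma requires. You correctly observe that Chebyshev reduces the problem to $\frac{1}{\rho_n^2}\Var[\Xi^{(n)}f]\to 0$. But you then try to show directly that the variance is $o(\rho_n^2)$ via a near/far split on the symmetric double integral, and you end up suspecting that Nevai's condition (which is \emph{not} among the hypotheses of Lemma~\ref{lem:3}) is needed to control the far region. This is where the argument stalls.

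The point you are missing is that it is enough to show $\frac{1}{\rho_n}\Var[\Xi^{(n)}f]=O(1)$; the extra factor $\frac{1}{\rho_n}\to 0$ then does all the work. And $\frac{1}{\rho_n}\Var=O(1)$ follows from a completely crude estimate: write the double integral in \eqref{eq:46} as $\int_\RR f(x)\,G_n[f](x)\,K_n(x,x)\ud\mu(x)$ and bound $\|G_n[f]\|_\infty\le\|f\|_\infty$ (Proposition~\ref{prop:1}). This gives
\[
\frac{1}{\rho_n}\Var[\Xi^{(n)}f]
\ \le\
\frac{1}{\rho_n}\int_\RR f^2(x)K_n(x,x)\ud\mu(x)
\ +\
\|f\|_\infty\,\frac{1}{\rho_n}\int_\RR |f(x)|K_n(x,x)\ud\mu(x),
\]
and both terms converge by \eqref{eq:47} (applied with the bounded continuous test functions $wf^2$ and $w|f|$) to $\int f^2\ud\nu$ and $\|f\|_\infty\int|f|\ud\nu$. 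Hence $\limsup_n\frac{1}{\rho_n}\Var[\Xi^{(n)}f]<\infty$, and \eqref{eq:50} follows from Chebyshev and \eqref{eq:48}. No Nevai-type input is needed.

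What you were attempting---proving the sharper statement $\frac{1}{\rho_n}\Var[\Xi^{(n)}f]\to 0$---is precisely the content of the later Theorem~\ref{thm:4}, where Nevai's condition \emph{is} assumed and the argument you sketch (Fatou for weakly convergent measures, controlling $G_n[f]\to f$) is carried out. So your instinct about what that stronger claim needs is correct; it just isn't what Lemma~\ref{lem:3} asks for.
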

\begin{proof}
The formula \eqref{eq:48} follows immediately from \eqref{eq:45} and \eqref{eq:47}. Next, in view of \eqref{eq:17}
\begin{equation} \label{eq:51}
    \int_\RR \int_\RR f(x) f(y) K_n^2(x,y) \ud \mu(x) \ud \mu(y) 
    =
    \int_\RR 
    f(x) 
    G_n[f](x)
    K_n(x,x) \ud \mu(x).
\end{equation}
Notice
\begin{align}
    \nonumber
    \bigg|
    \int_\RR 
    f(x) 
    G_n[f](x)
    K_n(x,x) \ud \mu(x)
    \bigg|
    &\leq 
    \sup_{y \in \RR} |G_n[f](y)|
    \int_\RR 
    |f(x)|
    K_n(x,x) \ud \mu(x) \\
    \label{eq:52}
    &\leq
    \norm{f}_\infty
    \int_\RR
    |f(x)| K_n(x,x) \ud \mu(x),
\end{align}
where in the last inequality we used Proposition~\ref{prop:1}.
Therefore, by \eqref{eq:46} and \eqref{eq:47} we get
\begin{equation} \label{eq:49}
	\limsup_{n \to \infty} 
	\frac{1}{\rho_n} \Var [\Xi^{(n)} f]
	\leq
	\int_\RR f^2 \ud \nu +
	\norm{f}_\infty \int_\RR |f| \ud \nu < \infty.
\end{equation}
Finally, by Chebyshev's inequality, it implies that for any $\epsilon > 0$
\begin{align}
	\nonumber
	\PP 
	\bigg( 
	\bigg|
		\frac{1}{\rho_n} \Xi^{(n)} f - 
		\EE \bigg[ \frac{1}{\rho_n} \Xi^{(n)} f \bigg] 
	\bigg| \geq \epsilon 
	\bigg) 
	&\leq
	\frac{1}{\epsilon^2} \Var \bigg[ \frac{1}{\rho_n} \Xi^{(n)} f \bigg] \\
	\label{eq:53}
	&\leq
	\frac{1}{\epsilon^2} \frac{1}{\rho_n^2} \Var[\Xi^{(n)} f].
\end{align}
Thus, the formula \eqref{eq:50} follows from \eqref{eq:49} and \eqref{eq:48}.
\end{proof}

Under some additional hypotheses we can improve the convergence \eqref{eq:50} to $\PP$-almost sure convergence.
\begin{theorem} \label{thm:4}
Suppose that for some strictly increasing sequence $(\rho_n : n \geq 1)$ tending to infinity, there are a continuous function $w:\RR \to [1,\infty)$ and a positive Radon measure $\nu$ satisfying $\int_\RR \frac{1}{w(x)} \ud \nu(x) < \infty$ such that
\begin{equation}
	\label{eq:47'}
	\frac{1}{\rho_n} 
	\frac{K_n(x,x)}{w(x)} \ud \mu(x) 
	\xrightarrow{w} 
	\frac{1}{w(x)} \ud \nu(x).
\end{equation}
Suppose that there is $X \subset \RR$ with $\nu(X) = 0$ such that $\mu$ satisfies Nevai's condition locally uniformly around each $x \in \RR \setminus X$. If 
\begin{equation} \label{eq:98}
	\lim_{n \to \infty} n (\rho_{n+1} - \rho_n) = \infty,
\end{equation}
then for any $f \in \calC_b(\RR)$ satisfying $\sup_{x \in \RR} w(x) |f(x)| < \infty$ we have
\begin{equation} \label{eq:60'}
	\PP
	\bigg(
	\lim_{n \to \infty}
	\frac{1}{\rho_n} \Xi^{(n)} f = \int_\RR f \ud \nu
	\bigg)
	= 1.
\end{equation}
\end{theorem}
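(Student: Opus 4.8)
The plan is to upgrade the convergence in probability of Lemma~\ref{lem:3} to $\PP$-almost sure convergence via a Borel--Cantelli argument along a carefully chosen subsequence, together with a monotone sandwiching argument to fill the gaps; Nevai's condition enters through a sharpened variance estimate, and \eqref{eq:98} is what makes the subsequence construction possible.

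First I would reduce to $f \geq 0$. Writing $f = f_+ - f_-$ with $f_\pm := \max(\pm f, 0) \in \calC_b(\RR)$, both $f_\pm$ are non-negative, satisfy $\sup_{x \in \RR} w(x) f_\pm(x) < \infty$, and $\int_\RR f \ud \nu = \int_\RR f_+ \ud \nu - \int_\RR f_- \ud\nu$; since $\Xi^{(n)}$ acts linearly on functions, it suffices to establish \eqref{eq:60'} for non-negative $f$. For such $f$, passing if necessary to the natural coupling of the orthogonal polynomial ensembles under which $\Xi^{(n)}$ is obtained from $\Xi^{(n+1)}$ by deleting one point --- available because the associated projection kernels satisfy $K_n \leq K_{n+1}$ --- the sequence $(\Xi^{(n)} f : n \geq 1)$ is non-decreasing.

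Next I would sharpen \eqref{eq:49}. Combining \eqref{eq:46}, \eqref{eq:51} and \eqref{eq:22} one gets
\[
	\Var[\Xi^{(n)} f] = \int_\RR f(x)\big(f(x) - G_n[f](x)\big) K_n(x,x) \ud\mu(x),
\]
and since $\|G_n[f]\|_\infty \leq \|f\|_\infty$ by Proposition~\ref{prop:1}, since $G_n[f] \to f$ locally uniformly on $\RR \setminus X$ by the assumed Nevai's condition (with $\nu(X) = 0$), and since $\sup_x w(x)|f(x)| < \infty$ together with $\int_\RR w^{-1} \ud\nu < \infty$ controls the tails, a dominated-convergence argument using the weak convergence \eqref{eq:47'} gives $\rho_n^{-1} \Var[\Xi^{(n)}f] \to 0$. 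Using \eqref{eq:98} --- which forces $(\rho_n)$ to grow faster than $\log n$ --- I would extract a strictly increasing sequence $(n_k)$ with $\sum_k \rho_{n_k}^{-1} < \infty$ and $\rho_{n_{k+1}}/\rho_{n_k} \to 1$ (for instance $n_k := \min\{n : \rho_n \geq k^2\}$ when $(\rho_n)$ is sufficiently regular; the improved variance bound provides extra flexibility when it is not). By Chebyshev's inequality, the variance bound, and Borel--Cantelli, together with $\rho_{n_k}^{-1} \EE[\Xi^{(n_k)} f] \to \int_\RR f \ud\nu$ from \eqref{eq:48}, we obtain $\rho_{n_k}^{-1} \Xi^{(n_k)} f \to \int_\RR f \ud \nu$ $\PP$-almost surely.

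Finally, for $n_k \leq n < n_{k+1}$, monotonicity of $n \mapsto \Xi^{(n)} f$ together with $\rho_{n_k} \leq \rho_n \leq \rho_{n_{k+1}}$ gives
\[
	\frac{\rho_{n_k}}{\rho_{n_{k+1}}} \cdot \frac{1}{\rho_{n_k}} \Xi^{(n_k)} f
	\leq
	\frac{1}{\rho_n} \Xi^{(n)} f
	\leq
	\frac{\rho_{n_{k+1}}}{\rho_{n_k}} \cdot \frac{1}{\rho_{n_{k+1}}} \Xi^{(n_{k+1})} f,
\]
and letting $n \to \infty$ (hence $k \to \infty$) and using the preceding step with $\rho_{n_{k+1}}/\rho_{n_k} \to 1$, both sides converge $\PP$-a.s.\ to $\int_\RR f \ud\nu$; this proves \eqref{eq:60'} for $f \geq 0$, and the general case follows by subtracting the statements for $f_+$ and $f_-$. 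The main difficulty is the subsequence extraction: one must balance sparsity (for Borel--Cantelli) against density (for the sandwiching), and \eqref{eq:98} is exactly the hypothesis that permits this, the case of an irregular $(\rho_n)$ requiring an extra refinement of the subsequence within the ``bad'' blocks.
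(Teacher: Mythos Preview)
Your overall plan and the paper's diverge at the crucial step.  You share with the paper the reduction to the variance estimate $\rho_n^{-1}\Var[\Xi^{(n)}f]\to 0$ (your sketch of this via uniform Nevai and a dominated/Fatou argument against the weakly convergent measures $\eta_n$ is essentially what the paper does, using \cite[Theorem~2.2]{Feinberg2020}).  From there, however, the paper does \emph{not} argue via Chebyshev along a subsequence: it invokes the concentration inequality \cite[Theorem~2.1]{Breuer2014}, valid once $\rho_n^{-1}\Var[\Xi^{(n)}f]$ is small, to obtain exponential tails $\PP(|\rho_n^{-1}\Xi^{(n)}f-\rho_n^{-1}\EE[\Xi^{(n)}f]|\geq\epsilon)\leq 2\exp(-c\rho_n)$, and then uses Raabe's test together with \eqref{eq:98} to show $\sum_n e^{-c\rho_n}<\infty$.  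Borel--Cantelli applies to the full sequence, for \emph{any} coupling.

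Your route has two genuine gaps.  First, ``passing to the natural monotone coupling'' changes the problem: the statement is made for the $(\Xi^{(n)})$ already given on a common probability space, and almost-sure convergence is coupling-dependent, so proving it only for the Lyons coupling does not establish the theorem.  Second, and more seriously, the subsequence you need does not exist in general.  Take any strictly increasing $(\rho_n)$ with $\rho_{n+1}-\rho_n\geq 1$ for all $n$ and $\rho_{t_j+1}=2\rho_{t_j}$ at infinitely many indices $t_j\to\infty$; then $n(\rho_{n+1}-\rho_n)\geq n\to\infty$, so \eqref{eq:98} holds, yet for \emph{every} strictly increasing $(n_k)$ and every $j$ there is $k$ with $n_k\leq t_j<n_{k+1}$, whence $\rho_{n_{k+1}}/\rho_{n_k}\geq\rho_{t_j+1}/\rho_{t_j}=2$, so $\rho_{n_{k+1}}/\rho_{n_k}\not\to1$ and your sandwich never closes.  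There is no ``refinement within the bad blocks'' to perform, since the doubling occurs in a single step.  The paper's use of the Breuer--Duits concentration inequality is precisely what replaces all of this: it upgrades the polynomial Chebyshev tail $O(\rho_n^{-1})$ to an exponential one, making the series over all $n$ summable directly.
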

\begin{proof}
Since for any $\epsilon > 0$
\[
	\PP 
	\bigg( 
	\bigg|
		\frac{1}{\rho_n} \Xi^{(n)} f - 
		\frac{1}{\rho_n} \EE[\Xi^{(n)} f]
	\bigg| \geq \epsilon 
	\bigg)
	=
	\PP 
	\big( 
	\big|
		\Xi^{(n)} f - 
		\EE[\Xi^{(n)} f]
	\big| \geq \epsilon \rho_n
	\big).
\]
We have by \cite[Theorem 2.1]{Breuer2014} the bound 
\begin{equation} \label{eq:59'}
	\PP 
	\bigg( 
	\bigg|
		\frac{1}{\rho_n} \Xi^{(n)} f - 
		\frac{1}{\rho_n} \EE[\Xi^{(n)} f]
	\bigg| \geq \epsilon 
	\bigg)
	\leq
	2 \exp \bigg(-\frac{\epsilon}{6 \norm{f}_\infty} \rho_n \bigg).
\end{equation}
for all $n \geq 1$ such that
\begin{equation} \label{eq:62}
	\epsilon \geq 
	\frac{2A}{3 \norm{f}_\infty} 
	\frac{1}{\rho_n} \Var[\Xi^{(n)} f],
\end{equation}
where $A > 0$ is some constant. If we prove that
\begin{equation} \label{eq:55}
	\lim_{n \to \infty} \frac{1}{\rho_n} \Var[\Xi^{(n)} f] = 0.
\end{equation}
Then the right-hand side of \eqref{eq:62} tends to $0$. So there is a constant $M \geq 1$ such that the inequality \eqref{eq:62} is satisfied for all $n \geq M$, which implies that \eqref{eq:59'} holds for all $n \geq M$. Therefore, by the Borel--Cantelli lemma, we get \eqref{eq:60'} provided
\begin{equation} \label{eq:61'}
	\sum_{n=1}^\infty \ue^{-c \rho_n} < \infty \quad \text{for any } c>0.
\end{equation}
To verify \eqref{eq:61'} we shall use Raabe's test. Let $r_n = \rho_{n+1} - \rho_n$. Notice that
\[
	\frac{1}{c r_n} 
	\bigg( \frac{\ue^{-c \rho_n}}{\ue^{-c \rho_{n+1}}} - 1 \bigg) 
	=
	\frac{\ue^{c r_n} - 1}{c r_n} \geq 1.
\]
Thus, by \eqref{eq:98}
\[
	\liminf_{n \to \infty}
	n \bigg( \frac{\ue^{-c \rho_n}}{\ue^{-c \rho_{n+1}}} - 1 \bigg)
	\geq c \liminf_{n \to \infty} n r_n = \infty.
\]
Thus, Raabe's test implies \eqref{eq:61'}.

It remains to prove \eqref{eq:55}. By \eqref{eq:46} and \eqref{eq:17} we can write
\begin{equation} \label{eq:56}
	\frac{1}{\rho_n} \Var[\Xi^{(n)} f] =
	\frac{1}{\rho_n} \int_\RR f^2(x) K_n(x,x) \ud \mu -
	\int_\RR h_n \ud \eta_n
\end{equation}
where
\begin{equation} \label{eq:57}
	h_n(x) =
	w(x) f(x) G_n[f](x), \quad
	\ud \eta_n = \frac{1}{\rho_n} \frac{K_n(x,x)}{w(x)} \ud \mu(x)
\end{equation}
Since $\Var[\Xi^{(n)} f] \geq 0$ by taking the limit superior and using \eqref{eq:48} we get
\[
	\limsup_{n \to \infty} \int_\RR h_n \ud \eta_n
	\leq 
	\int_\RR f^2 \ud \nu.
\]
Thus, \eqref{eq:55} will follow once we prove
\begin{equation} \label{eq:58}
	\liminf_{n \to \infty} 
	\int_\RR h_n \ud \eta_n \geq 
	\int_\RR f^2 \ud \nu.
\end{equation}
We are going to apply Fatou's lemma for weakly converging measures (see \cite[Theorem 2.2]{Feinberg2020}). Notice that \eqref{eq:47'} states that $\eta_n \xrightarrow{w} \frac{1}{w(x)} \ud \nu(x)$. 
Next, since
\[
	\norm{h_n}_\infty \leq 
	\norm{f}_\infty \cdot \sup_{x \in \RR} w(x) |f(x)| < \infty,
\]
the sequence $(h_n : n \geq 1)$ is uniformly integrable with respect to $(\eta_n : n \geq 1)$ (cf. \cite[Definition 2.4]{Feinberg2020}). Therefore, the hypotheses of \cite[Theorem 2.2]{Feinberg2020} are satisfied. Therefore,
\begin{equation} \label{eq:96}
	\liminf_{n \to \infty} 
	\int_\RR h_n(x) \ud \eta_n(x)
	\geq
	\int_\RR \liminf_{\substack{n \to \infty\\x' \to x}} 
	h_n(x') \frac{1}{w(x)} \ud \nu(x).
\end{equation}

\begin{claim} \label{clm:1}
If $\mu$ satisfies Nevai's condition uniformly in some compact interval $K \subset \RR$, then for any $f \in \calC_b(\RR)$
\begin{equation} \label{eq:97}
	\lim_{\substack{n \to \infty\\x' \to x}} G_n[f](x') = f(x)
\end{equation}
holds for any $x \in \intr(K)$. 
\end{claim}
To see this, it is enough to notice that for any $x,x' \in K$
\[
	\big| G_n[f](x') - f(x) \big| \leq
	\sup_{x' \in K}
	\big| G_n[f](x') - f(x') \big| + 
	\big| f(x') - f(x) \big|
\]
we have by \eqref{eq:19}
\[
	\lim_{\substack{n \to \infty\\x' \to x}}
	\big| G_n[f](x') - f(x) \big| 
	\leq 
	\lim_{x' \to x} |f(x') - f(x)| = 0,
\]
by continuity of $f$ at $x$.

Since by the assumption Nevai's condition holds locally uniformly around $\nu$-a.a. $x \in \RR$, by using Claim~\ref{clm:1} in \eqref{eq:96} and recalling \eqref{eq:57} we get \eqref{eq:58}, and consequently, also \eqref{eq:55}. The proof is complete.
\end{proof}

The following result shows that if we strengthen the condition~\eqref{eq:47'}, then we can substantially weaken the required form of Nevai's condition and study linear statistics associated with compactly supported bounded test functions.
\begin{theorem} \label{thm:6}
Let $K \subset \RR$ be compact. Suppose that for a strictly increasing sequence $(\rho_n : n \geq 1)$ tending to infinity, there is a positive Radon measure $\nu$ such that for every $f \in \calL^\infty(K, \mu)$, a bounded $\mu$-measurable function supported on $K$,
\begin{equation} \label{eq:100}
	\lim_{n \to \infty} 
	\frac{1}{\rho_n} \int_\RR f(x) K_n(x,x) \ud \mu(x) =
	\int_\RR f \ud \nu.
\end{equation}
Suppose that for any $\epsilon > 0$ and any $f \in \calL^\infty(K, \mu)$
\begin{equation} \label{eq:101}
	\lim_{n \to \infty}
	\nu \big( \{ x \in K : |G_n[f](x) - f(x)| > \epsilon \}\big) = 0.
\end{equation}
If
\[
	\lim_{n \to \infty} n (\rho_{n+1} - \rho_n) = \infty,
\]
then for any $f \in \calL^\infty(K, \mu)$ we have
\[
	\PP
	\bigg(
	\lim_{n \to \infty}
	\frac{1}{\rho_n} \Xi^{(n)} f = \int_\RR f \ud \nu
	\bigg)
	= 1.
\]
\end{theorem}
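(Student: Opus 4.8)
The plan is to mirror the proof of Theorem~\ref{thm:4} as closely as possible, localizing all estimates to the compact set $K$. The variance still decomposes as in \eqref{eq:56}, but now with $w \equiv 1$ and the test function $f \in \calL^\infty(K,\mu)$ supported on $K$. First I would record that $f^2 \in \calL^\infty(K,\mu)$, so \eqref{eq:100} applied to $f^2$ gives $\frac{1}{\rho_n}\int_\RR f^2(x) K_n(x,x)\ud\mu \to \int_\RR f^2\ud\nu$; this handles the first term in \eqref{eq:56}. For the second term, set $h_n(x) = f(x) G_n[f](x)$ and $\ud\eta_n = \frac{1}{\rho_n} K_n(x,x)\ud\mu(x)$ restricted to $K$ (note $h_n$ is supported on $K$ since $f$ is, and by Proposition~\ref{prop:1} we have $\norm{h_n}_\infty \leq \norm{f}_\infty^2 < \infty$). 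The upper bound $\limsup_n \int h_n\ud\eta_n \leq \int f^2\ud\nu$ follows exactly as before from $\Var[\Xi^{(n)} f]\geq 0$ together with \eqref{eq:100}. So again everything reduces to the matching lower bound $\liminf_n \int h_n\ud\eta_n \geq \int_\RR f^2\ud\nu$.

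The point where the argument must genuinely differ is the lower bound: in Theorem~\ref{thm:4} one invoked locally uniform Nevai's condition and the Fatou lemma for weakly converging measures, but here we only have the weaker measure-convergence statement \eqref{eq:101}, so the pointwise $\liminf$ argument of \eqref{eq:96} is unavailable. Instead I would argue directly with $\nu$-measure estimates. Fix $\epsilon > 0$. On the set $A_n^\epsilon := \{x \in K : |G_n[f](x) - f(x)| \leq \epsilon\}$ we have $h_n(x) = f(x)G_n[f](x) \geq f^2(x) - \epsilon|f(x)| \geq f^2(x) - \epsilon\norm{f}_\infty$, while on the complementary (small) set we can only lose a bounded amount. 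Writing $\mu_n$ for the finite measures $\frac{1}{\rho_n} K_n(\cdot,\cdot)\ud\mu$ restricted to $K$, one gets
\[
	\int_K h_n \ud\mu_n \geq \int_K (f^2 - \epsilon\norm{f}_\infty)\ud\mu_n - \norm{f}_\infty^2\, \mu_n\big(K \setminus A_n^\epsilon\big) - \epsilon\norm{f}_\infty\,\mu_n(K).
\]
The first term tends to $\int_K f^2\ud\nu - \epsilon\norm{f}_\infty\,\nu(K)$ by \eqref{eq:100} (applied to $f^2$ and to the indicator $\ind{K}$), and $\mu_n(K) \to \nu(K)$ likewise. The main obstacle, and the only genuinely new ingredient, is showing $\mu_n(K \setminus A_n^\epsilon) \to 0$: this does not follow from \eqref{eq:101} alone, because $\nu(K\setminus A_n^\epsilon)\to 0$ only controls the $\nu$-size of the bad set, not its $\mu_n$-mass. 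To bridge this I would use that $\mu_n$ restricted to $K$ converges weakly to $\nu$ restricted to $K$ (a consequence of \eqref{eq:100} applied to continuous compactly-supported test functions, together with the uniform total-mass bound $\mu_n(K)\to\nu(K)$); then for Borel $B \subset K$ with $\nu(\partial_K B)=0$ one has $\mu_n(B)\to\nu(B)$. One must be slightly careful because $K\setminus A_n^\epsilon$ changes with $n$, so I would instead dominate: for any $\delta>0$ choose, using regularity of $\nu$, a compact $C\subset K$ with $\nu(K\setminus C)<\delta$; cover the situation by noting that \eqref{eq:100} forces the measures $\mu_n|_K$ to be uniformly tight and absolutely-continuous-in-the-limit with respect to $\nu$, so $\limsup_n \mu_n(K\setminus A_n^\epsilon) \leq C'\limsup_n \nu(K\setminus A_n^\epsilon) = 0$; here the constant $C'$ comes from a Radon--Nikodym/uniform-integrability bound that \eqref{eq:100} provides once one knows $\mu_n|_K \xrightarrow{w}\nu|_K$ with matching masses.

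Granting the claim $\mu_n(K\setminus A_n^\epsilon)\to 0$, letting $n\to\infty$ and then $\epsilon\downarrow 0$ yields $\liminf_n \int_K h_n\ud\mu_n \geq \int_K f^2\ud\nu = \int_\RR f^2\ud\nu$, which combined with the upper bound gives $\int_\RR f^2\ud\nu \leq \liminf_n\int h_n\ud\eta_n \leq \limsup_n \int h_n\ud\eta_n \leq \int_\RR f^2\ud\nu$, hence \eqref{eq:55} holds: $\frac{1}{\rho_n}\Var[\Xi^{(n)} f]\to 0$. From this point the proof is identical to that of Theorem~\ref{thm:4}: the decay \eqref{eq:55} makes the hypothesis \eqref{eq:62} of the concentration bound \cite[Theorem 2.1]{Breuer2014} hold for all large $n$, giving the sub-Gaussian tail \eqref{eq:59'}; the growth condition $n(\rho_{n+1}-\rho_n)\to\infty$ combined with Raabe's test gives $\sum_n e^{-c\rho_n}<\infty$ for every $c>0$; and Borel--Cantelli upgrades the convergence in probability (which follows from \eqref{eq:55} and \eqref{eq:100} via Chebyshev, exactly as in Lemma~\ref{lem:3}) to $\PP$-almost sure convergence $\frac{1}{\rho_n}\Xi^{(n)} f \to \int_\RR f\ud\nu$. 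I expect the bookkeeping around $\mu_n|_K \xrightarrow{w} \nu|_K$ versus the non-open, $n$-dependent sets $A_n^\epsilon$ to be the one place requiring real care; everything else is a transcription of the earlier arguments with $w\equiv 1$.
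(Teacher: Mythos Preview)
Your overall reduction is correct: everything comes down to the lower bound
\[
	\liminf_{n\to\infty}\int_K f(x)\,G_n[f](x)\,\ud\eta_n(x) \ \ge\ \int_K f^2\ud\nu,
\]
and once that is in hand the rest (concentration via \cite[Theorem 2.1]{Breuer2014}, Raabe's test, Borel--Cantelli) carries over verbatim from Theorem~\ref{thm:4}. The gap is precisely at the step you flagged as requiring ``real care''.

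You treat \eqref{eq:100} only as weak convergence of $\eta_n|_K$ to $\nu|_K$ and then try to control $\eta_n(K\setminus A_n^\epsilon)$ for the $n$-dependent sets $A_n^\epsilon$ by an unspecified ``Radon--Nikodym/uniform-integrability bound''. No such bound follows from weak convergence with matching total mass: weak convergence says nothing about $\eta_n(B_n)$ for varying Borel sets $B_n$, and there is no reason the $\eta_n$ should be uniformly absolutely continuous with respect to $\nu$. So as written the argument does not close.

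What you are missing is that \eqref{eq:100} is much stronger than weak convergence. It is assumed for \emph{every} $f\in\calL^\infty(K,\mu)$, in particular for every indicator $\ind{B}$ with $B\subset K$ Borel; hence $\eta_n(B)\to\nu(B)$ for all such $B$, i.e.\ $\eta_n|_K\to\nu|_K$ \emph{setwise}. With this observation the paper bypasses your direct estimate entirely: it invokes Fatou's lemma for setwise converging measures \cite[Theorem~4.2]{Feinberg2020}, whose hypotheses are exactly (i) setwise convergence $\eta_n|_K\to\nu|_K$, (ii) uniform integrability of $(h_n)$ (here immediate from $\sup_n\|h_n\|_{\calL^\infty(\mu)}\le\|f\|_{\calL^\infty(\mu)}^2$), and (iii) convergence of $h_n$ to $f^2$ in $\nu$-measure, which is precisely \eqref{eq:101}. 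That theorem yields the desired $\liminf$ inequality in one stroke, with no need to track the bad sets $K\setminus A_n^\epsilon$ by hand. If you want to rescue your direct approach, you would essentially be reproving that Fatou lemma; the setwise convergence is the missing ingredient that makes the varying-sets issue tractable.
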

\begin{proof}
Notice that \eqref{eq:100} is stronger than \eqref{eq:47'} when applied to functions supported on $K$.

Let $f \in \calL^\infty(K, \mu)$.
By an analogous reasoning to the proof of Theorem~\ref{thm:4} we need only to prove that 
\begin{equation} \label{eq:102}
	\liminf_{n \to \infty} 
	\int_\RR h_n \ud \nu_n
	\geq 
	\int_\RR f^2 \ud \nu
\end{equation}
(cf. \eqref{eq:58}), where 
\[
	h_n(x) = f(x) G_n[f](x), \quad
	\ud \eta_n = \frac{1}{\rho_n} K_n(x,x) \ud \mu(x)
\]
(cf. \eqref{eq:57}). We are going to apply Fatou's lemma for setwise converging measures (see \cite[Theorem 4.2]{Feinberg2020}. Notice that \eqref{eq:100} implies that $\eta_n(\cdot \cap K) \to \nu(\cdot \cap K)$ setwise (cf. \cite[Definition 2.2]{Feinberg2020}). Next, since
\[
    \sup_{n \geq 1} \| h_n \|_{\calL^\infty(\mu)} 
    \leq 
    \| f \|_{\calL^\infty(\mu)}^2 < \infty
\]
the sequence $(h_n : n \geq 1)$ is uniformly integrable with respect to $(\eta_n : n \geq 1)$. In view of \eqref{eq:101} the hypotheses of \cite[Theorem 4.2]{Feinberg2020} are satisfied, and consequently, we obtain \eqref{eq:102}. The proof is complete.
\end{proof}

The following example should be compared with \cite[Remark 3.6]{Breuer2014}.
\begin{example}
Let $\mu$ be a measure on the real line with support $K$ consisting of finitely many compact intervals. Suppose that $\mupAC(x) > 0$ for $\lambda$-a.a. $x \in K$. Let $\rho_n = n$. Then by \cite[Corollary 1]{Totik2000}, the hypothesis \eqref{eq:100} is satisfied, where $\nu$ is the equilibrium measure of $K$. Recall that $\nu$ is absolutely continuous on $K$ with a density $\nu'$ which is real analytic on $\intr{K}$ and belongs to $\calL^p(K, \lambda)$ for any $1<p<2$, see, e.g. \cite[Theorem IV.2.1]{SaffTotik2024}. Let $1<p<2$, then for $q = p/(p-1)$ by H\"older's inequality
\[
	\| G_n[f] - f \|_{\calL^1(K, \nu)} \leq 
	\| G_n[f] - f \|_{\calL^q(K, \lambda)} \cdot
	\| \nu' \|_{\calL^p(K, \lambda)}.
\]
Thus, by \cite[Theorem 1.2]{Lubinsky2011}, we get
\[
	\lim_{n \to \infty} \| G_n[f] - f \|_{\calL^1(K, \nu)} = 0,
\]
and the condition~\eqref{eq:101} is implied by Chebyshev's inequality. Let us recall that for the case $K = [-1,1]$ the condition~\eqref{eq:100} is implied by \cite[Theorem 2]{Mate1984} and condition \eqref{eq:55} directly follows from \cite[Theorem 2]{Nevai1980}.
\end{example}

\subsection{Comparative asymptotics of Christoffel--Darboux kernels on the diagonal} \label{sec:3.1}

Let $\mu$ be a probability measure on $\RR$ with infinite support and all moments finite.
Let $g : \RR \to (0, \infty)$ be such that $g,1/g \in \calL^\infty(\mu)$. Let the measure $\mu_g$ be defined as in \eqref{eq:15}.
Then by \cite[p. 76, Theorem 6.2.3]{Nevai1979} we have 	
\begin{equation} \label{eq:37}
    \frac{1}{G_n[g](x)} \leq 
    \frac{K_n(x,x; \mu_g)}{K_n(x,x; \mu)} \leq 
    G_n \bigg[ \frac{1}{g} \bigg](x), \quad x \in \RR.
\end{equation}
Thus,
\[
    \inf_{y \in \RR} \frac{1}{G_n[g](y)} \leq 
    \frac{K_n(x,x; \mu_g)}{K_n(x,x; \mu)} \leq 
    \sup_{y \in \RR} G_n \bigg[ \frac{1}{g} \bigg](y), \quad x \in \RR,
\]
so by applying Proposition~\ref{prop:1} we get
\begin{equation} \label{eq:38}
    \frac{1}{\|g\|_{\calL^\infty(\mu)}} \leq 
    \frac{K_n(x,x; \mu_g)}{K_n(x,x; \mu)} \leq 
    \bigg\| \frac{1}{g} \bigg\|_{\calL^\infty(\mu)}, \quad x \in \RR.
\end{equation}
\begin{corollary} 
	\label{thm:1}
	Suppose that $\mu$ satisfies Nevai's condition at some $x \in \RR$.
	Let $g : \RR \to (0, \infty)$ be such that $g,1/g \in \calL^\infty(\mu)$. 
	If $g$ is continuous at $x$, then
	\begin{equation} 
		\label{eq:39}
		\lim_{n \to \infty} \frac{K_n(x,x; \mu_g)}{K_n(x,x;\mu)} = \frac{1}{g(x)}
	\end{equation}
	Moreover, if $\mu$ satisfies uniform Nevai's condition on some compact $K \subset \RR$
	and $g$ is continuous on $K$, then the convergence in \eqref{eq:39} is uniform on~$K$.
\end{corollary}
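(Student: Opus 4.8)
The engine of the proof is the two-sided estimate~\eqref{eq:37}, which pins the ratio $K_n(x,x;\mu_g)/K_n(x,x;\mu)$ between $1/G_n[g](x)$ and $G_n[1/g](x)$. So it suffices to prove that $G_n[g](x)\to g(x)$ and $G_n[1/g](x)\to 1/g(x)$; since $g(x)>0$, the former also gives $1/G_n[g](x)\to 1/g(x)$, and a squeeze in~\eqref{eq:37} finishes the argument. Note that $1/g$ is continuous wherever $g$ is (using $g>0$) and $1/g\in\calL^\infty(\mu)$ by hypothesis, so everything reduces to the single statement: \emph{if $\mu$ satisfies Nevai's condition at $x$ and $h\in\calL^\infty(\mu)$ is continuous at $x$, then $G_n[h](x)\to h(x)$}, together with its uniform version on a compact set.

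For the pointwise statement I would use the reformulation recalled after~\eqref{eq:27}: Nevai's condition at $x$ is exactly $\omega_n^x\xrightarrow{w}\delta_x$. Writing $G_n[h](x)-h(x)=\int_\RR\big(h(y)-h(x)\big)\,\ud\omega_n^x(y)$ and fixing $\epsilon>0$, pick $\delta>0$ with $|h(y)-h(x)|<\epsilon$ for $|y-x|<\delta$. Splitting the integral over $B:=(x-\delta,x+\delta)$ and its complement and using $|h|\le\|h\|_{\calL^\infty(\mu)}$ $\mu$-a.e., one obtains
\[
	|G_n[h](x)-h(x)|\le\epsilon+\big(\|h\|_{\calL^\infty(\mu)}+|h(x)|\big)\big(1-\omega_n^x(B)\big).
\]
Since $B$ is open, the portmanteau theorem gives $\liminf_n\omega_n^x(B)\ge\delta_x(B)=1$, hence $\omega_n^x(B)\to1$; letting $n\to\infty$ and then $\epsilon\downarrow0$ yields $G_n[h](x)\to h(x)$. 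Applying this to $h=g$ and $h=1/g$ and plugging into~\eqref{eq:37} proves~\eqref{eq:39}.

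For the uniform statement on a compact $K$ I would first upgrade uniform Nevai's condition on $K$ to the concentration bound
\[
	\liminf_{n\to\infty}\inf_{x\in K}\omega_n^x\big([x-\delta,x+\delta]\big)=1\qquad(\delta>0),
\]
i.e.\ the hypothesis of Lemma~\ref{lem:2}. To this end, cover $K$ by finitely many balls $B(x_i,\delta/3)$, $i=1,\dots,N$, choose $\psi_i\in\calC_b(\RR)$ with $\mathds{1}_{B(x_i,\delta/3)}\le\psi_i\le\mathds{1}_{B(x_i,2\delta/3)}$, and note that for $x\in K\cap B(x_i,\delta/3)$ one has $\psi_i(x)=1$ and $\psi_i\le\mathds{1}_{[x-\delta,x+\delta]}$, whence
\[
	\omega_n^x\big([x-\delta,x+\delta]\big)\ge G_n[\psi_i](x)\ge 1-\sup_{y\in K}\big|G_n[\psi_i](y)-\psi_i(y)\big|.
\]
Taking the (finite!) maximum over $i$ and invoking~\eqref{eq:19} for each $\psi_i$ gives the concentration bound. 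Next, because $g$ is continuous at every point of $K$ and $K$ is compact, $g$ has a uniform modulus of continuity around $K$ (a routine subsequence argument): for every $\epsilon>0$ there is $\delta>0$ with $|g(y)-g(x)|<\epsilon$ whenever $x\in K$ and $|y-x|<\delta$. Running the estimate of the pointwise case uniformly in $x\in K$ with this $\delta$ and the concentration bound above gives $\sup_{x\in K}|G_n[g](x)-g(x)|\to0$, and the same reasoning applied to $1/g$ gives $\sup_{x\in K}|G_n[1/g](x)-1/g(x)|\to0$. Finally, feeding these two uniform limits into~\eqref{eq:37}, and using $\min_{x\in K}g(x)>0$ to bound $1/G_n[g](x)$ from below for large $n$, gives the uniform convergence in~\eqref{eq:39}.

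The two points that require a little care are exactly those flagged above: passing from uniform Nevai's condition to the concentration bound (handled by the finite subcover of $K$) and the uniform modulus of continuity of $g$ around $K$ (a compactness argument). The rest is bookkeeping with~\eqref{eq:37} and Proposition~\ref{prop:1}.
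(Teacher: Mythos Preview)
Your proof is correct and follows the same route as the paper: sandwich the ratio via~\eqref{eq:37} and show that $G_n[g](x)\to g(x)$ and $G_n[1/g](x)\to 1/g(x)$ by exploiting that Nevai's condition at $x$ is equivalent to $\omega_n^x\xrightarrow{w}\delta_x$ together with a portmanteau-type argument for bounded functions continuous at $x$. The paper compresses this last step into a citation of the continuous mapping part of the portmanteau theorem (Klenke, Theorem~13.16(iii)), while you spell out the $\epsilon$--$\delta$ splitting explicitly; these are the same argument. For the uniform statement the paper is extremely terse (it just says ``or~\eqref{eq:19}''), whereas you supply the details the paper leaves to the reader: the passage from~\eqref{eq:19} to the concentration bound of Lemma~\ref{lem:2} via a finite cover of $K$, and the compactness argument yielding a uniform modulus of continuity of $g$ around $K$. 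Both steps are correct as you wrote them.
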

\begin{proof}
It follows from applying \eqref{eq:18} (or \eqref{eq:19}) to \eqref{eq:37} and using \cite[Theorem 13.16(iii)]{Klenke2020}.
\end{proof}

Recall that in Section~\ref{sec:3.2} a central assumption was the property~\eqref{eq:47}. 
In Section~\ref{sec:4.1} we show examples of measures, defined in terms of their Jacobi parameters, for which the condition~\eqref{eq:47} holds true. The next theorem, under some form of Nevai's condition for a measure $\mu$, allows one to extend the convergence~\eqref{eq:47} to measures $\mu_g$ for any $g : \RR \to (0, \infty)$ such that $g,1/g \in \calC_b(\RR)$.

\begin{theorem} \label{thm:5}
Suppose that for some positive sequence $(\rho_n : n \geq 1)$ tending to infinity, there are a continuous function $w : \RR \to [1, \infty)$ and a positive Radon measure $\nu$ satisfying $\int_\RR \frac{1}{w(x)} \ud \nu(x) < \infty$ such that
\begin{equation} 
    \label{eq:40}
    \frac{1}{\rho_n} \frac{K_n(x,x;\mu)}{w(x)} \ud \mu(x) \xrightarrow{w} \frac{1}{w(x)} \ud \nu(x).
\end{equation}
Let $g : \RR \to (0, \infty)$ be such that $g,1/g \in \calC_b(\RR)$.
If there is $X \subset \RR$ with $\nu(X) = 0$ such that $\mu$ satisfies Nevai's condition locally uniformly around each $x \in \RR \setminus X$, then
\begin{equation} \label{eq:41}
	\frac{1}{\rho_n} \frac{K_n(x,x; \mu_g)}{w(x)} \ud \mu_g(x) \xrightarrow{w} \frac{1}{w(x)} \ud \nu(x).
\end{equation}
\end{theorem}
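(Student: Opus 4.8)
The plan is to realize the measure on the left of \eqref{eq:41} as a density times the measure appearing in \eqref{eq:40}, and then to push the weak convergence through a dominated-convergence argument suited to weakly converging measures. Writing $\ud\mu_g = g\,\ud\mu$ and inserting $K_n(x,x;\mu)/K_n(x,x;\mu)$, one has
\[
	\frac{1}{\rho_n}\frac{K_n(x,x;\mu_g)}{w(x)}\ud\mu_g(x) = \psi_n(x)\cdot\frac{1}{\rho_n}\frac{K_n(x,x;\mu)}{w(x)}\ud\mu(x), \qquad \psi_n(x) := g(x)\,\frac{K_n(x,x;\mu_g)}{K_n(x,x;\mu)}.
\]
So, writing $\eta_n$ for the measure in \eqref{eq:40} and $\ud\eta_\infty := \tfrac1w\ud\nu$ (a finite positive measure, by $\int \tfrac1w\ud\nu<\infty$), I must show that $\int_\RR h\psi_n\,\ud\eta_n \to \int_\RR h\,\ud\eta_\infty$ for every $h\in\calC_b(\RR)$, knowing that $\eta_n\xrightarrow{w}\eta_\infty$.

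Two ingredients feed this. First, by \eqref{eq:38} and $g,1/g\in\calC_b(\RR)$ the functions $\psi_n$ are bounded above and below by fixed positive constants, uniformly in $n$; since $K_n(\cdot,\cdot;\mu), K_n(\cdot,\cdot;\mu_g)\geq 1$, each $\psi_n$ is continuous, so $(h\psi_n)$ is a uniformly bounded sequence of continuous functions. Second, I need the joint limit $h(x')\psi_n(x')\to h(x)$ as $n\to\infty$ and $x'\to x$, for $\nu$-a.a.\ $x$. Here Nevai's condition enters: for $x\in\RR\setminus X$ there is a compact interval $K$ with $x\in\intr(K)$ on which $\mu$ satisfies Nevai's condition uniformly, so Claim~\ref{clm:1} applied to $g$ and to $1/g$ (both in $\calC_b(\RR)$) gives $G_n[g](x')\to g(x)$ and $G_n[1/g](x')\to 1/g(x)$ in the double-limit sense; feeding these into the two-sided bound \eqref{eq:37} evaluated at $x'$, together with $g(x)>0$, yields $K_n(x',x';\mu_g)/K_n(x',x';\mu)\to 1/g(x)$, and continuity of $g$ and $h$ then gives the claimed joint convergence of $h\psi_n$.

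With these in hand I would argue exactly as in the proof of Theorem~\ref{thm:4}: invoke Fatou's lemma for weakly converging measures \cite[Theorem 2.2]{Feinberg2020} twice. Since $(h\psi_n)$ is uniformly bounded and $\sup_n\eta_n(\RR)<\infty$ (because $\eta_n(\RR)\to\eta_\infty(\RR)<\infty$ by weak convergence applied to the constant $1$), $(h\psi_n)$ is uniformly integrable with respect to $(\eta_n)$, and the lemma gives $\liminf_n\int_\RR h\psi_n\,\ud\eta_n \geq \int_\RR h\,\ud\eta_\infty$. Applying the same lemma to $C-h\psi_n\geq 0$, where $C$ is a uniform bound for $|h\psi_n|$, and using $\int_\RR C\,\ud\eta_n\to C\,\eta_\infty(\RR)$, gives $\limsup_n\int_\RR h\psi_n\,\ud\eta_n \leq \int_\RR h\,\ud\eta_\infty$. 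Combining the two bounds yields the required limit, which is \eqref{eq:41}.

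I expect the main obstacle to be precisely the joint convergence step: Corollary~\ref{thm:1} only delivers $K_n(x,x;\mu_g)/K_n(x,x;\mu)\to 1/g(x)$ at a fixed point $x$, whereas the weakly (not setwise) converging measures $\eta_n$ force one to control the integrand on shrinking neighbourhoods of each $x$, i.e.\ along $x'\to x$ simultaneously with $n\to\infty$. The device that makes this work is Claim~\ref{clm:1}, which upgrades uniform Nevai's condition on a compact interval to exactly such a joint limit for $G_n[\cdot]$; the hypothesis that $g$ and $1/g$ are \emph{globally} continuous and bounded (not merely continuous at one point) is what permits applying it to both functions at once and then closing the gap via the sandwich \eqref{eq:37}.
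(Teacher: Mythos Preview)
Your proposal is correct and follows essentially the same route as the paper. The paper writes the same decomposition $h_n(x)=f(x)g(x)\,K_n(x,x;\mu_g)/K_n(x,x;\mu)$ against the same sequence $\eta_n$, bounds $\|h_n\|_\infty$ via \eqref{eq:38}, and obtains the joint limit $\lim_{n\to\infty,\,x'\to x}h_n(x')=f(x)$ for $x\notin X$; the only cosmetic differences are that the paper invokes Corollary~\ref{thm:1} (locally uniform convergence of the ratio) rather than redoing the sandwich \eqref{eq:37} via Claim~\ref{clm:1}, and that it cites Lebesgue's theorem for weakly converging measures \cite[Corollary~5.1]{Feinberg2020} directly instead of deriving it by applying Fatou's lemma to $h\psi_n$ and to $C-h\psi_n$.
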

\begin{proof}
Let $f \in \calC_b(\RR)$. We can write
\begin{equation} \label{eq:42}
	\frac{1}{\rho_n}
	\int_\RR f(x) \frac{K_n(x,x; \mu_g)}{w(x)} \ud \mu_g(x) =
	\int_\RR h_n(x) \ud \eta_n(x),
\end{equation}
where
\[
	h_n(x) = f(x) g(x) \frac{K_n(x,x; \mu_g)}{K_n(x,x;\mu)}, \quad
	\ud \eta_n =
	\frac{1}{\rho_n} \frac{K_n(x,x;\mu)}{w(x)} \ud \mu(x).
\]
We are going to apply Lebesgue's convergence theorem for weakly converging measures (see \cite[Corollary 5.1]{Feinberg2020}). Notice that \eqref{eq:40} states that $\eta_n \xrightarrow{w} \tfrac{1}{w(x)} \ud \nu(x)$. Next, since by \eqref{eq:38}
\[
	\| h_n \|_{\infty} \leq \| f \|_{\infty} \|g\|_{\infty} \bigg\| \frac{1}{g} \bigg\|_{\infty} < \infty
\]
the sequence $(h_n : n \geq 1)$ is uniformly integrable with respect to $(\eta_n : n \geq 1)$ (cf. \cite[Definition 2.4]{Feinberg2020}). By Corollary~\ref{thm:1}
\[
	\lim_{n \to \infty} h_n(x) = f(x)
\]
locally uniformly with respect to $x \in \RR \setminus X$. Thus
\[
	\lim_{\substack{n \to \infty\\x' \to x}} h_n(x') = f(x), \quad x \in \RR \setminus X.
\]
Therefore, the hypotheses of \cite[Corollary 5.1]{Feinberg2020} are satisfied, and consequently,
\[
	\lim_{n \to \infty} \int_\RR h_n(x) \ud \eta_n(x) =
	\int_\RR f(x) \frac{1}{w(x)} \ud \nu(x).
\]
In view \eqref{eq:42} it implies \eqref{eq:41}. The proof is complete.
\end{proof}

\section{Examples} \label{sec:4}

In this section, we shall consider unbounded Jacobi parameters defined in terms of periodic sequences. It turns out that it will be useful to impose some regularity conditions. More specifically, we say that a sequence $(x_n : n \geq 1)$ belongs to $\calD_r$ for some integer $r \geq 1$, if it is \emph{bounded} and for each $j \in \{1,\ldots,r \}$
\[
	\sum_{n=1}^\infty |\Delta^j x_n|^{\tfrac{r}{j}} < \infty
\]
where
\begin{align*}
	\Delta^0 x_n &= x_n \\
	\Delta^j x_n &= \Delta^{j-1} x_{n+1} - \Delta^{j-1} x_n, \quad j \geq 1.
\end{align*}
This class was introduced in \cite{Stolz1994}. Let us recall that $\calD_1$ consists of sequences of bounded variation and for any $r \geq 1$ one has $\calD_r \subsetneq \calD_{r+1}$. Given an integer $N \geq 1$ we say that $(x_n : n \geq 1) \in \calD_r^N$ if for any $i=0,1,\ldots,N-1$ we have $(x_{nN+i} : n \geq 1) \in \calD_r$.

It will be useful to rewrite the recurrence relation \eqref{eq:3} in the form
\begin{equation} \label{eq:63}
	\begin{pmatrix}
		p_n(x) \\
		p_{n+1}(x)
	\end{pmatrix}
	= 
	B_n(x)
	\begin{pmatrix}
		p_{n-1}(x) \\
		p_n(x)
	\end{pmatrix}, \quad \text{where} \quad
	B_n(x) =
	\begin{pmatrix}
		0 & 1 \\
		-\frac{a_{n-1}}{a_n} & \frac{x-b_n}{a_n}
	\end{pmatrix}, \quad n \geq 1.
\end{equation}
The matrix $B_n(x)$ is usually called ($1$-step) \emph{transfer matrix}. 

By $(\alpha_n : n \in \ZZ),(\beta_n : n \in \ZZ)$ we denote $N$-periodic sequences of positive and real numbers, respectively. Moreover, we define
\begin{equation} \label{eq:64}
	\frakB_n(x) =
	\begin{pmatrix}
		0 & 1 \\
		-\frac{\alpha_{n-1}}{\alpha_n} & \frac{x-\beta_n}{\alpha_n}
	\end{pmatrix}, \quad n \in \ZZ.
\end{equation}

\subsection{Periodic modulations} \label{sec:4.1}
We say that Jacobi parameters $(a_n),(b_n)$ are \emph{$N$-periodically modulated}, if there exist $N$-periodic sequences $(\alpha_n),(\beta_n)$ of positive and real numbers, respectively, such that
\begin{equation} \label{eq:65}
	\lim_{n \to \infty} 
	\bigg| \frac{a_{n-1}}{a_n} - \frac{\alpha_{n-1}}{\alpha_n} \bigg| = 0, \quad 
	\lim_{n \to \infty} 
	\bigg| \frac{b_n}{a_n} - \frac{\beta_n}{\alpha_n} \bigg| = 0, \quad
	\lim_{n \to \infty} a_n = \infty.
\end{equation}
Next, we define the $N$-step transfer matrix by
\[
	X_n(x) = B_{N+n-1}(x) \ldots B_{n+1}(x) B_n(x), \quad n \geq 1.
\]
Notice that \eqref{eq:65} implies that
\[
	\lim_{j \to \infty} X_{jN+i}(x) = \frakX_i(0), \quad i=0,1,\ldots,N-1,
\]
where
\begin{equation} \label{eq:66}
	\frakX_n(x) = \frakB_{N+n-1}(x) \ldots \frakB_{n+1}(x) \frakB_n(x), \quad n \in \ZZ.
\end{equation}
and $\frakB_n(x)$ is defined in \eqref{eq:64}. This class was introduced in \cite{JanasNaboko2002} and it has been studied ever since (see, e.g. \cite{ChristoffelI, ChristoffelII, Discrete, zeros, jordan, jordan2, PeriodicI, PeriodicII, PeriodicIII, SwiderskiTrojan2019, Simonov2007, Damanik2007, JanasNabokoStolz2004, Naboko2009, Pchelintseva2008, Breuer2010} and the references therein). Notice that by \eqref{eq:65} the condition \eqref{eq:5} is violated. Therefore, measures $\mu$ corresponding to $N$-periodically modulated Jacobi parameters have unbounded supports. It turns out that, \emph{under some regularity assumptions} on Jacobi parameters, some of fundamental properties of $\mu$ depend crucially on the matrix $\frakX_0(0)$. More specifically, one can distinguish the four cases:
\begin{enumerate}[label=\rm (\Roman*), start=1, ref=\Roman*]
\item \label{eq:PI}
if $\tr \frakX_0(0) \in (-2,2)$, then the measure $\mu$ is purely absolutely continuous on $\RR$ with a positive continuous density;

\item if $\tr \frakX_0(0) \in \{-2,2\}$, then we have two subcases:
\begin{enumerate}[label=\rm (\alph*), start=1, ref=II(\alph*)]
\item \label{eq:PIIa}
if $\frakX_0(0)$ is diagonalizable, then there is a compact interval $K \subset \RR$ such that the measure $\mu$ is purely absolutely continuous on $\RR \setminus K$ with a positive continuous density and it is purely discrete in the interior of $K$;

\item \label{eq:PIIb}
if $\frakX_0(0)$ is \emph{not} diagonalizable, then there is a possibly degenerated infinite open interval $\Lambda \subset \RR$ such that the measure $\mu$ is purely absolutely continuous on $\Lambda$ with a positive continuous density and it is purely discrete on $\RR \setminus \Lambda$;
\end{enumerate}

\item \label{eq:PIII}
if $\tr \frakX_0(0) \in \RR \setminus [-2,2]$, then the measure $\mu$ is purely discrete with the support having no finite accumulation points.
\end{enumerate}
One can describe these four cases geometrically. Specifically, we have 
\[
	(\tr \frakX_0)^{-1} \big( (-2, 2) \big) = 
	\bigcup_{j=1}^N I_j,
\]
where $I_j$ are disjoint open non-empty bounded intervals whose closures might touch each other. Let $I_j = (x_{2j-1}, x_{2j})$, where the sequence $(x_k : k = 1, 2, \ldots, 2N)$ is increasing. Then we are in the case~\ref{eq:PI} if $0$ belongs to some interval $I_j$, in the case~\ref{eq:PIIa} if $0$ lies on the boundary of exactly two intervals, in the case~\ref{eq:PIIb} if $0$ lies on the boundary of exactly one interval and in the case~\ref{eq:PIII} in the remaining cases. An example for $N=4$ is presented in Figure~\ref{img:perMod}.
\begin{figure}[h!]
	\centering
	\begin{tikzpicture}
		\draw[->, thin, black] (-5.5,0) -- (5.5, 0);  
		\draw[-,ultra thick, black!70] (-4.472135954,0) -- (-4,0);
		\draw[-,ultra thick, black!70] (-2,0) -- (2,0);
		\draw[-,ultra thick, black!70] (4,0) -- (4.472135954,0);
		\filldraw[black] (0,0) circle (1.6pt);
		\draw (0.05,-0.3) node {$x_4=x_5$};
		\filldraw[black] (-4.472135954,0) circle (1.6pt);
		\draw (-4.422135954,-0.3) node {$x_1$};
		\filldraw[black] (-4,0) circle (1.6pt);
		\draw (-3.95,-0.3) node {$x_2$};		
		\filldraw[black] (-2,0) circle (1.6pt);
		\draw (-1.95,-0.3) node {$x_3$};
		\filldraw[black] (2,0) circle (1.6pt);
		\draw (2.05,-0.3) node {$x_6$};
		\filldraw[black] (4,0) circle (1.6pt);
		\draw (4.05,-0.3) node {$x_7$};
		\filldraw[black] (4.472135954,0) circle (1.6pt);
		\draw (4.522135954,-0.3) node {$x_8$};
	\end{tikzpicture}
	\caption{An example for $N=4$. If $0 = x_4$, then we are in the case~\ref{eq:PIIa}, while $0 \in \{x_1, x_2, x_3, x_6, x_7, x_8 \}$ corresponds to the case~\ref{eq:PIIb}.}
	\label{img:perMod}
\end{figure}
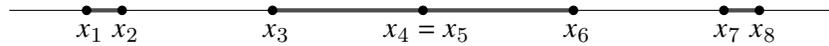

Below, we shall show that, under some regularity conditions, measures $\mu$ corresponding to $N$-periodically modulated Jacobi parameters satisfy Nevai's condition locally uniformly on $\supp(\mu) \setminus X$, where $X$ is some explicit finite set. This shows that the results from Section~\ref{sec:3} can be applied to such measures.

\subsubsection{Case \ref{eq:PI}} \label{sec:4.1.1}
Let $N \geq 1$. Suppose that $(a_n),(b_n)$ are $N$-periodically modulated Jacobi parameters such that 
$|\tr \frakX_0(0)| < 2$ and for some $r \geq 1$
\begin{equation} \label{eq:67}
	\bigg( \frac{a_{n-1}}{a_n} \bigg),
	\bigg( \frac{b_n}{a_n} \bigg),
	\bigg( \frac{1}{a_n} \bigg) \in \calD_r^N.
\end{equation}
Then according to \cite[Theorem A]{SwiderskiTrojan2019} the measure $\mu$ is determinate if and only if $\sum_{n=0}^\infty 1/a_n = \infty$. If this is the case, then by
\cite[Corollary 8]{SwiderskiTrojan2019} the measure $\mu$ is purely absolutely continuous on $\RR$ with a continuous positive density. 
Therefore, by Theorem~\ref{thm:3}, Nevai's condition holds uniformly on compact subsets of $\RR$.

Let us comment on some applications of our findings. First of all, by \cite[Theorem 1.2]{ChristoffelI},
\begin{equation} \label{eq:68}
	\lim_{n \to \infty} \frac{1}{\rho_n} K_n(x,x;\mu) = \frac{\upsilon(x)}{\mupAC(x)}
\end{equation}
locally uniformly with respect to $x \in \RR$, where(\footnote{For any $2 \times 2$ matrix $X$ we define its \emph{discriminant} by $\discr X = (\tr X)^2 - 4 \det X$.})
\begin{equation} \label{eq:69}
	\rho_n = \sum_{j=0}^{n-1} \frac{\alpha_j}{a_j} \quad \text{and} \quad
	\upsilon(x) \equiv \frac{1}{\pi N} \frac{|\tr \frakX_0'(0)|}{\sqrt{-\discr \frakX_0(0)}}.
\end{equation}
In view of \eqref{eq:68} the hypotheses of Corollary~\ref{thm:1} are satisfied for any compact $K \subset \RR$. Next, by \cite[Theorem 7.1]{zeros},
\begin{equation} \label{eq:70}
	\frac{1}{\rho_n} \frac{K_n(x,x;\mu)}{1+x^2} \ud \mu(x) \xrightarrow{w} \frac{1}{1+x^2} \upsilon(x) \ud \lambda(x).
\end{equation}
Therefore, the hypotheses of Theorem~\ref{thm:5} are satisfied for $w(x) = 1+x^2$. Observe that the hypotheses of Theorem~\ref{thm:4} are satisfied provided (cf. \eqref{eq:98})
\begin{equation} \label{eq:71}
    \lim_{n \to \infty} \frac{a_n}{n} = 0.
\end{equation}

Let us remark that Hermite, Freud, and Meixner--Pollaczek polynomials (see \cite[Section 5]{PeriodicII} for details) belong to the case~\ref{eq:PI} and satisfy \eqref{eq:67}.

\subsubsection{Case \ref{eq:PIIa}} \label{sec:4.1.3}

Let $N \geq 1$. Suppose that $(a_n),(b_n)$ are $N$-periodically modulated Jacobi parameters such that
$|\tr \frakX_0(0)| = 2$, and $\frakX_0(0)$ is diagonalizable. Then $\frakX_0(0) = \varepsilon \Id$ for some
$\varepsilon \in \{-1,1\}$. Suppose that
\begin{equation} \label{eq:73}
	\bigg( a_n \Big(\frac{a_{n-1}}{a_n} - \frac{\alpha_{n-1}}{\alpha_n} \Big) \bigg),
	\bigg( a_n \Big( \frac{b_n}{a_n} - \frac{\beta_n}{\alpha_n} \Big) \bigg),
	\bigg( \frac{1}{a_n} \bigg) \in \calD_1^N.
\end{equation}
Then according to \cite[Proposition 4]{ChristoffelII} the measure $\mu$ is determinate. Moreover,
\cite[Proposition 9]{PeriodicIII} implies that
\[
	h(x) = \lim_{j \to \infty} a_{jN+N-1}^2 \discr X_{jN}(x)
\]
exists. In view of the proof of \cite[Corollary 1]{PeriodicIII}, $h$ is a polynomial of degree $2$ with a negative leading coefficient. 
Let $\Lambda_- = h^{-1} \big( (-\infty,0) \big)$ and $\Lambda_+ = h^{-1} \big( (0, +\infty) \big)$. 
If 
\begin{equation} \label{eq:74}
	\lim_{n \to \infty} (a_{n+N} - a_n) = 0,
\end{equation}
then by \cite[Theorem D]{PeriodicIII} the measure $\mu$ is purely absolutely continuous on $\Lambda_-$ with a positive continuous density. Therefore, by Theorem~\ref{thm:3}, Nevai's condition holds uniformly on compact subsets of $\Lambda_-$. Next, by \cite[Theorem C]{Discrete}, the measure $\mu$ is purely discrete on $\Lambda_+$ and the only accumulation points of $\supp(\mu) \cap \Lambda_+$ are contained in the set $\partial \Lambda_- = \{x : h(x) = 0\}$. Since $h$ is a polynomial of degree $2$, the set $\partial \Lambda_-$ has at most two points. Finally, by Theorem~\ref{thm:2}, Nevai's condition holds uniformly on compact subsets of $\supp(\mu) \setminus \partial \Lambda_-$.

Let us comment on some applications of our findings. First of all, by \cite[Theorem 8 and Remark 1]{ChristoffelII}
\begin{equation} \label{eq:75}
	\lim_{n \to \infty} 
	\frac{1}{\rho_n} K_n(x,x;\mu) =
	\frac{\upsilon(x)}{\mupAC(x)}
\end{equation}
locally uniformly with respect to $x \in \Lambda_-$, where
\begin{equation} \label{eq:76}
	\rho_n = \sum_{j=0}^{n-1} \frac{\alpha_j}{a_j} \quad \text{and} \quad
	\upsilon(x) = \frac{1}{4 \pi N \alpha_{N-1}} \frac{|h'(x)|}{\sqrt{-h(x)}} \mathds{1}_{\Lambda_-}(x)
\end{equation}
(cf. \cite[formula (7.12)]{ChristoffelII}).
In view of \eqref{eq:75} the hypotheses of Corollary~\ref{thm:1} are satisfied for any compact $K \subset \Lambda_-$. Next, by \cite[Theorem 7.2]{zeros}
\begin{equation} \label{eq:77}
	\frac{1}{\rho_n} \frac{K_n(x,x;\mu)}{1+x^2} \ud \mu(x) \xrightarrow{w}
	\frac{1}{1+x^2} \upsilon(x) \ud \lambda(x).
\end{equation}
Therefore, the hypotheses of Theorem~\ref{thm:5} are satisfied for $w(x) = 1 + x^2$. To prove that the hypotheses of Theorem~\ref{thm:4} are satisfied, it is enough to prove that
\begin{equation} \label{eq:78}
	\lim_{n \to \infty} \frac{a_n}{n} = 0
\end{equation}
(cf. \eqref{eq:98}).
Observe that for any $i \in \{0,1,\ldots,N-1\}$ we have by Stolz-Ces\`{a}ro theorem and \eqref{eq:74}
\[
	\lim_{n \to \infty} \frac{a_{nN+i}}{nN+i} =
	\frac{1}{N} \lim_{n \to \infty} (a_{(n+1)N+i} - a_{nN+i}) = 0,
\]
which leads to \eqref{eq:78}.

Let us remark that generalized Hermite polynomials (see Example~\ref{ex:genHerm} for details) belong to the case~\ref{eq:PIIa} and they satisfy \eqref{eq:73} and \eqref{eq:74}.

\subsubsection{Case \ref{eq:PIIb}} \label{sec:4.1.4}

Let $N \geq 1$. Suppose that $(a_n),(b_n)$ are $N$-periodically modulated Jacobi parameters such that $\frakX_0(0)$ is non-diagonalizable. Let $\gamma=(\gamma_n : n \in \NN)$ be a sequence of positive numbers tending to infinity and satisfying 
\begin{equation} \label{eq:79}
	\bigg(\sqrt{\gamma_n} \Big(\sqrt{\frac{\alpha_{n-1}}{\alpha_n}} - \sqrt{\frac{\gamma_{n-1}}{\gamma_n}}\Big) \bigg),
	\bigg(\frac{1}{\sqrt{\gamma_n}} \bigg) \in \calD_1^N,
\end{equation}
and
\begin{equation} \label{eq:80}
	\lim_{n \to \infty} 
	\big(\sqrt{\gamma_{n+N}} - \sqrt{\gamma_n} \big) = 0.
\end{equation}
Suppose that
\begin{equation} \label{eq:81}
	\bigg(\sqrt{\gamma_n} \Big(\frac{\alpha_{n-1}}{\alpha_n} - \frac{a_{n-1}}{a_n} \Big) \bigg),
	\bigg(\sqrt{\gamma_n} \Big(\frac{\beta_n}{\alpha_n} - \frac{b_n}{a_n}\Big) \bigg),
	\bigg(\frac{\gamma_n}{a_n} \bigg) \in \calD_1^N
\end{equation}
and for $\varepsilon = \sign{\tr \frakX_0(0)}$ 
\begin{equation}  \label{eq:82}
	\bigg(
	\gamma_n\big(1- \varepsilon [\frakX_n(0)]_{11}\big)
	\Big(\frac{\alpha_{n-1}}{\alpha_n} - \frac{a_{n-1}}{a_n}\Big)
	-
	\gamma_n \varepsilon \Big(\frac{\beta_n}{\alpha_n} - \frac{b_n}{a_n}\Big) \bigg) \in \calD_1^N.
\end{equation}
Then according to \cite[Theorem A]{jordan2}
\[
	h(x) = \lim_{j \to \infty} \gamma_{jN+N-1} \discr X_{jN}(x)
\]
exists and is a polynomial of degree at most one. 
Let $\Lambda_- = h^{-1} \big( (-\infty,0) \big)$ and $\Lambda_+ = h^{-1} \big( (0, +\infty) \big)$. 
Suppose that $\Lambda_- \cup \Lambda_+ \neq \emptyset$ and that the measure $\mu$ is determinate (for detailed discussion of the second property, see \cite[Section 9]{jordan2}).

If $\Lambda_- \neq \emptyset$ we define
\begin{equation} \label{eq:83}
	\rho_n = \sum_{j=0}^{n-1} \frac{\sqrt{\alpha_j \gamma_j}}{a_j} \quad \text{and} \quad
	\upsilon(x) = \frac{\sqrt{\alpha_{N-1}}}{\pi N} \frac{|\tr \frakX_0'(0)|}{\sqrt{-h(x)}} \mathds{1}_{\Lambda_-}(x).
\end{equation}
Then according to \cite[Theorem B]{jordan2} the measure $\mu$ is determinate if and only if $\rho_n \to \infty$. Hence, according to \cite[Corollary 7.4]{zeros}, the measure $\mu$ is purely absolutely continuous on $\Lambda_-$ with a continuous positive density and 
\begin{equation} \label{eq:84}
	\lim_{n \to \infty} \frac{1}{\rho_n} K_n(x,x;\mu) = \frac{\upsilon(x)}{\mupAC(x)}
\end{equation}
locally uniformly with respect to $x \in \Lambda_-$. Consequently, by Theorem~\ref{thm:3}, Nevai's condition holds uniformly on compact subsets of $\Lambda_-$. Next, if $\Lambda_+ \neq \emptyset$, then by \cite[Theorem A]{jordan2} the measure $\mu$ is purely discrete on $\Lambda_+$ and the only accumulation points of $\supp(\mu) \cap \Lambda_+$ are contained in the set $\partial \Lambda_- = \{ x : h(x) = 0 \}$. Since $h$ is a non-zero polynomial of degree at most $1$, the set $\partial \Lambda_-$ has at most one point. Thus, by Theorem~\ref{thm:2}, Nevai's condition holds uniformly on compact subsets of $\supp(\mu) \setminus \partial \Lambda_-$. 

Let us comment on some applications of our findings. In view of \eqref{eq:84} the hypotheses of Corollary~\ref{thm:1} are satisfied for any compact $K \subset \Lambda_-$. Next, by \cite[Theorem 7.3]{zeros},
\begin{equation} \label{eq:85}
	\frac{1}{\rho_n} \frac{K_n(x,x;\mu)}{1+x^2} \ud \mu(x) \xrightarrow{w} \frac{1}{1+x^2} \upsilon(x) \ud \lambda(x).
\end{equation}
Therefore, the hypotheses of Theorem~\ref{thm:5} are satisfied for $w(x) = 1+x^2$. Moreover, the hypotheses of Theorem~\ref{thm:4} are satisfied  provided (cf. \eqref{eq:98})
\begin{equation} \label{eq:86}
	\lim_{n \to \infty} \frac{a_n}{n\sqrt{\gamma_n}} = 0.
\end{equation}

Let us remark that Laguerre-type polynomials (see Example~\ref{ex:genLag} for details) belong to the case~\ref{eq:PIIb} and they satisfy \eqref{eq:79}, \eqref{eq:80}, \eqref{eq:81}, \eqref{eq:82} for $\gamma_n = a_n$.

\subsubsection{Case \ref{eq:PIII}} \label{sec:4.1.2}
Let $N \geq 1$. Suppose that $(a_n),(b_n)$ are $N$-periodically modulated Jacobi parameters such that
$|\tr \frakX_0(0)| > 2$, and for some $r \geq 1$,
\begin{equation} \label{eq:72}
	\bigg( \frac{a_{n-1}}{a_n} \bigg),
	\bigg( \frac{b_n}{a_n} \bigg),
	\bigg( \frac{1}{a_n} \bigg) \in \calD_r^N.
\end{equation}
Then according to \cite[Theorem 5.3]{Discrete} (cf. \cite[Remark 5.4]{Discrete}) the measure $\mu$ is determinate, the measure $\mu$
is purely discrete on $\RR$ and $\supp(\mu)$ has no finite accumulation points. Thus, in view of Theorem~\ref{thm:2}, Nevai's condition holds uniformly on compact subsets of $\supp(\mu)$.

Since $\muAC = 0$, there seem to be no interesting applications of Section~\ref{sec:3} for this case.

Let us remark that Meixner polynomials (see Example~\ref{ex:Meixner} for details) belong to the case~\ref{eq:PIII} and they satisfy \eqref{eq:72}.

\subsection{Periodic blend} \label{sec:4.2}
We say that Jacobi parameters $(a_n),(b_n)$ are \emph{$N$-periodically blended}, if there exist $N$-periodic sequences $(\alpha_n),(\beta_n)$ of positive and real numbers, respectively, such that
\[
	\lim_{j \to \infty} |a_{j(N+2)+i} - \alpha_i| = 0, \quad
	\lim_{j \to \infty} |b_{j(N+2)+i} - \beta_i| = 0, \quad i=0,1,\ldots,N-1,
\]
and $b_{j(N+2)+N} = b_{j(N+2)+N+1} = 0$, and
\begin{equation} \label{eq:91}
	\lim_{j \to \infty} a_{j(N+2)+N} = \infty, \quad
	\lim_{j \to \infty} \frac{a_{j(N+2)+N}}{a_{j(N+2)+N+1}} = 1.
\end{equation}
Next, we define the $(N+2)$-step transfer matrix by
\[
	X_n(x) = B_{N+1+n}(x) \ldots B_{n+1}(x) B_n(x), \quad n \geq 1.
\]
Then by \cite[Section 3.3]{ChristoffelI} we have
\[
	\calX_1(x) = \lim_{j \to \infty} X_{j(N+2)+1}(x) =
	\begin{pmatrix}
		0 & -1 \\
		-\frac{\alpha_{N-1}}{\alpha_0} & -\frac{2x - \beta_0}{\alpha_0}
	\end{pmatrix}
	\frakB_{N-1}(x) \ldots \frakB_2(x) \frakB_1(x).
\]
Let $h(x) = \discr \calX_1(x)$ and set 
\[
	\Lambda_- = h^{-1} \big( (-\infty,0) \big) \quad \text{and} \quad
	\Lambda_+ = h^{-1} \big( (0,+\infty) \big).
\]
By \cite[Theorem 3.13]{ChristoffelI} 
\[
	\Lambda_- = \bigcup_{j=1}^N I_j,
\]
where $I_j$ are disjoint open non-empty bounded intervals whose closures might touch each other.

Notice that by \eqref{eq:91} the condition \eqref{eq:5} is violated. Therefore, measures $\mu$ corresponding to $N$-periodically blended Jacobi parameters have unbounded supports. This class has been introduced in \cite{Janas2011} as an explicit example of measures such that $\supp(\muAC)$ is the closure of $\Lambda_-$. Later, this class has been studied in more detail in \cite{Discrete, ChristoffelI, SwiderskiTrojan2019}.

Suppose that $(a_n),(b_n)$ are $N$-periodically blended Jacobi parameters such that for some $r \geq 1$
\[
	\bigg( \frac{1}{a_n} \bigg), 
	\bigg( \frac{b_n}{a_n} \bigg) 
	\in \calD_r^{N+2}
	\quad \text{and} \quad
	\bigg( \frac{a_{n(N+2)+N}}{a_{n(N+2)+N+1}} \bigg) 
	\in \calD_r^{1}.
\]
Then according to \cite[Corollary 9]{SwiderskiTrojan2019} the moment problem for $\mu$ is determinate and the measure $\mu$ is purely absolutely continuous on $\Lambda_-$ with a continuous positive density. Therefore, by Theorem~\ref{thm:3}, Nevai's condition holds uniformly on compact subsets of $\Lambda_-$. Next, by \cite[Theorem 5.1]{Discrete} (cf. \cite[Remark 5.2]{Discrete}), the measure $\mu$ is purely discrete on $\Lambda_+$ and the only accumulation points of $\supp(\mu) \cap \Lambda_+$ are contained in the set $\partial \Lambda_- = \{x : h(x) = 0\}$. Since $h$ is a polynomial of degree $2N$, the set $\partial \Lambda_-$ has at most $2N$ points. Thus, by Theorem~\ref{thm:2}, Nevai's condition holds uniformly on compact subsets of $\supp(\mu) \setminus \partial \Lambda_-$.

Let us recall that by \cite[Theorem 4.10]{ChristoffelI} (cf. Theorem~3.13, Corollary~3.15 and Remark~4.14 from \cite{ChristoffelI}) 
\begin{equation} \label{eq:90}
	\lim_{n \to \infty} \frac{1}{\rho_n} K_n(x,x;\mu) = \frac{\upsilon(x)}{\mupAC(x)}
\end{equation}
locally uniformly with respect to $x \in \Lambda_-$, where
\begin{equation} \label{eq:87}
	\rho_n = n \quad \text{and} \quad
	\upsilon(x) = \frac{1}{\pi (N+2)} 
	\frac{|\tr \calX_1'(0)|}{\sqrt{-\discr \calX_1(0)}} \mathds{1}_{\Lambda_-}(x).
\end{equation}
In view of \eqref{eq:90} the hypotheses of Corollary~\ref{thm:1} are satisfied for any compact $K \subset \Lambda_-$. Let us remark that weak convergence of the Christoffel--Darboux kernels for periodic blend has not been studied yet, but it seems that the techniques of \cite{zeros} can be applied here.

\subsection{Examples of explicit measures} \label{sec:4.3}
In this section we present a few examples of well-known measures with periodically modulated Jacobi parameters satisfying the appropriate regularity conditions from Section~\ref{sec:4.1}. 

\begin{example}[Freud weights] \label{ex:Freud}
For $\gamma \geq 1$, consider the probability measure
\[
	\ud \mu = c_{\gamma} \ue^{-|x|^\gamma} \ud \lambda(x),
\]
where $c_{\gamma}$ is the normalization constant. Notice that for $\gamma=2$ we recover a normal distribution. 
According to \cite[Example 3]{PeriodicII} the Jacobi parameters of the determinate measure $\mu$ satisfy \eqref{eq:65} with $N=1$ and $\alpha_n \equiv 1, \beta_n \equiv 0$. 
Then by \eqref{eq:66} we have
\begin{equation} \label{eq:88}
	\frakX_0(x) = 
	\begin{pmatrix}
		0 & 1 \\
		-1 & x
	\end{pmatrix}.
\end{equation}
Thus, $\tr \frakX_0(0) = 0$. Therefore, we are in the setup of Section~\ref{sec:4.1.1}. Moreover, condition~\eqref{eq:67} is satisfied for $r=1$. Let $\rho_n$ and $\upsilon$ be defined as in \eqref{eq:83}. Using \eqref{eq:88}, we immediately get $\upsilon(x) \equiv \frac{1}{2 \pi}$. By the Stolz--Ces\`{a}ro Theorem and \cite[Example 3]{PeriodicII} one can check that
\begin{equation} \label{eq:89}
	\lim_{n \to \infty} \frac{\rho_n}{\tilde{\rho}_n} = 1 \quad \text{where} \quad
	\tilde{\rho}_n = 
	\frac{1}{2} 
	\Bigg( 
		\frac
		{\Gamma \big( \tfrac{\gamma}{2} \big) \Gamma \big( \tfrac{1}{2} \big)} 
		{\Gamma \big( \tfrac{1+\gamma}{2} \big)}
	\Bigg)^{\tfrac{1}{\gamma}}
	\sum_{j=1}^n j^{-\tfrac{1}{\gamma}}.
\end{equation}
Moreover, by \cite[Example 3]{PeriodicII} the condition \eqref{eq:71} is satisfied if and only if $\gamma > 1$.
\end{example}

\begin{example}[Meixner polynomials] \label{ex:Meixner}
For any $s > 0$ and $p \in (0,1)$ the negative binomial distribution is the probability measure
\[
	\mu = (1-p)^s \sum_{k=0}^\infty \binom{k+s-1}{k} p^k \delta_k.
\]
Notice that for $s=1$ we recover geometric distributions.
The orthonormal polynomials in $L^2(\mu)$ are called \emph{Meixner polynomials} and they satisfy
\[
	a_n = \frac{\sqrt{(n+1)(n+s) p}}{1-p}, \quad b_n = \frac{n + (n+s)p}{1-p},
\]
see, e.g. \cite[formula (9.10.4)]{Koekoek2010}. It is easy to check that \eqref{eq:65} is satisfied for $N=1$ and $\alpha_n \equiv 1, \beta_n \equiv \sqrt{p} + \frac{1}{\sqrt{p}} > 2$. 
Then by \eqref{eq:66} we have
\[
	\frakX_0(x) = 
	\begin{pmatrix}
		0 & 1 \\
		-1 & x - \sqrt{p} - \frac{1}{\sqrt{p}}
	\end{pmatrix}.
\]
Thus, we are in the setup of Section~\ref{sec:4.1.2}. Moreover, it is easy to show that \eqref{eq:72} is satisfied for $r=1$.
\end{example}

\begin{example}[Generalized Hermite polynomials] \label{ex:genHerm}
For any $t > -1$ consider the probability measure
\[
	\ud \mu = \frac{1}{\Gamma(\tfrac{1+t}{2})} |x|^{t} \ue^{-x^2} \ud \lambda(x).
\]
Notice that for $t=0$ we recover a normal distribution. According to \cite[Example 1]{PeriodicII} the Jacobi parameters of $\mu$ satisfy \eqref{eq:65} with $N=1$ and $\alpha_n \equiv 1, \beta_n \equiv 0$. However, unless $t=0$ the condition~\eqref{eq:72} is \emph{not} satisfied for $N=1$. Again by \cite[Example 1]{PeriodicII}, the condition~\eqref{eq:73} is satisfied for $N=2$ and we are in the setup of Section~\ref{sec:4.1.3}. Let $\rho_n$ and $\upsilon$ be defined as in \eqref{eq:76}. By the Stolz--Ces\`{a}ro Theorem one can check that
\[
	\lim_{n \to \infty} \frac{\rho_n}{\tilde{\rho}_n} = 1 
	\quad \text{where} \quad
	\tilde{\rho}_n = \sqrt{2} \sum_{j=1}^n j^{-\tfrac{1}{2}}
\]
Moreover, \cite[Corollary 3]{PeriodicII} (see also \cite[Theorem 6]{ChristoffelII}) implies that $\upsilon(x) = \frac{1}{2 \pi} \mathds{1}_{\RR \setminus \{0\}}(x)$. Finally, condition~\eqref{eq:78} is satisfied.
\end{example}

\begin{example}[Laguerre-type polynomials] \label{ex:genLag}
For any $\gamma > -1$ and $\kappa \in \{2,3,\ldots \}$ consider the probability measure
\[
	\ud \mu = c_{\gamma,\kappa} x^\gamma \ue^{-x^\kappa} \mathds{1}_{(0,\infty)}(x) \ud \lambda(x),
\]
where $c_{\gamma,\kappa}$ is the normalizing constant. According to \cite[Section 10.1.1]{jordan} the Jacobi parameters of the measure $\mu$ satisfy \eqref{eq:65} with $N=1$ and $\alpha_n \equiv 1, \beta_n \equiv 2$. 
Then by \eqref{eq:66} we have
\[
	\frakX_0(x) = 
	\begin{pmatrix}
		0 & 1 \\
		-1 & x - 2
	\end{pmatrix}.
\]
Hence, we are in the setup of Section~\ref{sec:4.1.4}. Moreover, again by \cite[Section 10.1.1]{jordan}, the hypotheses \eqref{eq:79}, \eqref{eq:80}, \eqref{eq:81} and \eqref{eq:82} are satisfied for $\gamma_n = a_n$. Let $\rho_n$ and $\upsilon$ be defined as in \eqref{eq:83}. By the Stolz--Ces\`{a}ro Theorem and \cite[Section 10.1.1]{jordan} one can check that
\[
	\lim_{n \to \infty} \frac{\rho_n}{\tilde{\rho}_n} = 1 
	\quad \text{where} \quad
	\tilde{\rho}_n =
	\frac{1}{2} \bigg( \frac{2 (2\kappa)!!}{\kappa (2 \kappa-1)!!} \bigg)^{\tfrac{1}{2\kappa}}
	\sum_{j=1}^n j^{-\tfrac{1}{2 \kappa}}.
\]
Moreover, by \cite{jordan} (see Section 10.1.1, Corollary 3.4 and formula (3.1)) we have $h(x) = -4 x$, from which one can verify that $\upsilon(x) = \frac{1}{2 \pi} \frac{1}{\sqrt{x}} \mathds{1}_{(0, \infty)}(x)$. Finally, condition~\eqref{eq:86} is satisfied.
\end{example}

\begin{bibliography}{jacobi}
	\bibliographystyle{amsplain}
\end{bibliography}

\end{document}